\newcolumntype{P}[1]{>{\centering\arraybackslash}p{#1}}
\numberwithin{equation}{section}  
\numberwithin{table}{section}
\numberwithin{figure}{section}
\numberwithin{algorithm}{section}
\DeclareMathOperator{\argmin}{argmin}
\DeclarePairedDelimiter{\ceil}{\lceil}{\rceil}
\def\R{\mathbb{R}}
\def\Sc{\mathbb{S}}
\def\Sn{\Sc^n}
\def\Snp{\Sc_+^n}
\def\Snpp{\Sc_{++}^n}
\def\Rk{\mathbb{R}^k}
\def\Rd{\mathbb{R}^d}
\def\Rn{\mathbb{R}^n}
\def\Rnp{\mathbb{R}_+^n}
\def\eqref#1{{\normalfont(\ref{#1})}}
\def\eqref#1{{\normalfont(\ref{#1})}}
\newtheorem{theorem}{Theorem}[section]
\newtheorem{definition}[theorem]{Definition}
\newtheorem{example}[theorem]{Example}
\newtheorem{lem}[theorem]{Lemma}
\newtheorem{thm}[theorem]{Theorem}
\newtheorem{cor}[theorem]{Corollary}
\newtheorem{remark}[theorem]{Remark}
\newtheorem{problem}[theorem]{Problem}
\newtheorem{lemma}[theorem]{Lemma}
\crefname{theorem}{Theorem}{Theorems}
\Crefname{theorem}{Theorem}{Theorems}
\crefname{thm}{Theorem}{Theorems}
\Crefname{thm}{Theorem}{Theorems}
\crefname{problem}{Problem}{Theorems}
\Crefname{problem}{Problem}{Theorems}
\Crefname{assump}{Assumption}{Theorems}
\crefname{assump}{Assumption}{Theorems}
\crefname{conjecture}{Conjecture}{Theorems}
\Crefname{conjecture}{Conjecture}{Theorems}
\crefname{prop}{Proposition}{Propositions}
\Crefname{prop}{Proposition}{Propositions}
\crefname{cor}{Corollary}{Corollaries}
\Crefname{cor}{Corollary}{Corollaries}
\Crefname{corollary}{Corollary}{Corollaries}
\Crefname{coroll}{Corollary}{Corollaries}
\crefname{lem}{Lemma}{Lemmas}
\Crefname{lem}{Lemma}{Lemmas}
\theoremstyle{definition}
\crefname{defn}{definition}{definitions}
\Crefname{defn}{Definition}{Definitions}
\crefname{conj}{Conjecture}{Conjectures}
\Crefname{conj}{Conjecture}{Conjectures}
\crefname{remark}{Remark}{Remarks}
\Crefname{remark}{Remark}{Remarks}
\crefname{rmk}{Remark}{Remarks}
\Crefname{rmk}{Remark}{Remarks}
\crefname{example}{Example}{Examples}
\Crefname{example}{Example}{Examples}
\Crefname{case}{Case}{Cases}
\Crefname{Case}{Case}{Cases}
\crefname{align}{}{}
\Crefname{align}{}{}
\crefname{equation}{}{}
\Crefname{equation}{}{}
\newcommand{\textdef}[1]{\textit{#1}\index{#1}}
\DeclareMathOperator{\cK}{{\mathcal K}}
\newcommand{\KK}{{\mathcal K} }
\newcommand{\cE}{{\mathcal E} }
\newcommand{\SC}{\mathcal{S}^n_C}
\newcommand{\Sh}{{\mathcal S}^n_H}
\newcommand{\En}{{{\mathcal E}^n} }
\newcommand{\Ek}{{{\mathcal E}^k} }
\newcommand{\cL}{{\mathcal L} }
\newcommand{\cI}{{\mathcal I} }
\newcommand{\NEDM}{\textbf{NEDM}\,}
\newcommand{\NEDMp}{\textbf{NEDM}}
\newcommand{\EDM}{\textbf{EDM}\,}
\newcommand{\EDMp}{\textbf{EDM}}
\newcommand{\MIP}{\textbf{MIP}\,}
\newcommand{\MIPp}{\textbf{MIP}}
\newcommand{\SBGTp}{\textbf{SBGT}}
\newcommand{\BIEVp}{\textbf{BIEV}}
\newcommand{\MBFV}{\textbf{MBFV}\,}
\newcommand{\MBFVp}{\textbf{MBFV}}
\newcommand{\SDP}{\textbf{SDP}\,}
\newcommand{\SDPp}{\textbf{SDP}}
\newcommand{\DC}{\textbf{D\&C}\,}
\newcommand{\DCp}{\textbf{D\&C}}
\newcommand{\FV}{\textbf{FV}\,}
\newcommand{\FVp}{\textbf{FV}}
\newcommand{\FR}{\textbf{FR}\,}
\newcommand{\FRp}{\textbf{FR}}
\newcommand{\Rtn}{{\R^{\scriptsize{t(n)}}\,}}
\newcommand{\Rtno}{{\R^{\scriptsize{t(n-1)}}\,}}
\newcommand{\MM}{{\mathcal M}}
\newcommand{\cN}{{\mathcal{N}}}
\newcommand{\A}{{\mathcal A}}
\newcommand{\bbm}{\begin{bmatrix}}
\newcommand{\ebm}{\end{bmatrix}}
\newcommand{\bem}{\begin{pmatrix}}
\newcommand{\eem}{\end{pmatrix}}
\newcommand{\beqs}{\begin{equation*}}
\newcommand{\bet}{\begin{table}}
\newcommand{\eeq}{\end{equation}}
\newcommand{\eeqs}{\end{equation*}}
\newcommand{\beqr}{\begin{eqnarray}}
\DeclareMathOperator{\face}{face}
\DeclareMathOperator{\Null}{null}
\DeclareMathOperator{\nul}{null}
\DeclareMathOperator{\range}{range}
\DeclareMathOperator{\embdim}{edim}
\DeclareMathOperator{\gal}{{gal}}
\DeclareMathOperator{\blkdiag}{{blkdiag}}
\DeclareMathOperator{\diag}{{diag}}
\DeclareMathOperator{\Diag}{{Diag}}
\DeclareMathOperator{\offDiag}{{offDiag}}
\DeclareMathOperator{\usMat}{{usMat}}
\DeclareMathOperator{\usvec}{{usvec}}
\DeclareMathOperator{\svec}{{svec}}
\DeclareMathOperator{\sMat}{{sMat}}
\DeclareMathOperator{\relint}{{relint}}
\newcommand{\nc}{\newcommand}
\nc{\Mm}{{\mathcal M}^{m} }
\nc{\Mmn}{{\mathcal M}^{mn} }
\nc{\Mnr}{{\mathcal M}_{nr} }
\nc{\Mnmr}{{\mathcal M}_{(n-1)r} }
\nc{\kwqqp}{Q{$^2$}P\,}
\nc{\kwqqps}{Q{$^2$}Ps}
\nc{\notinaho}{(X,S)\in \overline{AHO}(\A)}
\nc{\inaho}{(X,S)\in AHO(\A)}
\newcommand{\bea}{\begin{eqnarray}}%
\newcommand{\eea}{\end{eqnarray}}%
\newcommand{\beas}{\begin{eqnarray*}}%
\newcommand{\eeas}{\end{eqnarray*}}%
\newcommand{\Rnn}{\R^{n \times n}}%
\newcommand{\Hnp}[1][]{\,\mathbb{H}_+^{\ifthenelse{\equal{#1}{}}{n}{#1}}}
\newcommand{\Hn}[1][]{\,\mathbb{H}^{\ifthenelse{\equal{#1}{}}{n}{#1}}}
\newcommand{\Hk}[1][]{\,\mathbb{H}^{\ifthenelse{\equal{#1}{}}{k}{#1}}}
\newcommand{\Dn}[1][]{\,\mathbb{D}^{\ifthenelse{\equal{#1}{}}{n}{#1}}}
\newcommand{\cP}{\mathcal{P}}
\newcommand{\floor}[1]{\lfloor#1\rfloor}
\begin{document}

\title{
Single Element Error Correction
	in a Euclidean Distance Matrix
}

\newcommand*\samethanks[1][\value{footnote}]{\footnotemark[#1]}

\author{
	\href{https://www.uwindsor.ca/science/math/716/faculty-dr-y-alfakih}
	{Abdo Alfakih}\thanks{Dept. Math. \& Stat., Univ. of Windsor,
	Canada} \and
	\href{https://uwaterloo.ca/combinatorics-and-optimization/contacts/woosuk-jung}
	{Woosuk L. Jung}\thanks{
    Department of Combinatorics and Optimization, Faculty of Mathematics, University of Waterloo, Waterloo, Ontario, Canada N2L 3G1; Research supported by The Natural Sciences and Engineering Research Council of Canada.}
	\and
   \href{http://www.math.uwaterloo.ca/~hwolkowi/} {Henry
   Wolkowicz\samethanks}%
   \and
\href{https://uwaterloo.ca/combinatorics-and-optimization/contacts/tina-xu}
  {Tina Xu}\samethanks
}

\date{\today}
          \maketitle

\abstract{
We consider the \emph{exact} error correction of
a noisy Euclidean distance matrix, \EDMp, where the elements are
the squared distances between $n$ points in $\R^d$. 
For our problem we are given two facts:
(i) the embedding dimension, $d=\embdim(D)$, 
(ii) \emph{exactly one} distance in the data is corrupted by 
\emph{nonzero noise}.
But we do \underline{not} know the magnitude nor position of the noise. 
Thus there is a combinatorial element to the problem.
We present three solution techniques.
These use three divide and conquer strategies in combination
with three versions of facial reduction that use:  
exposing vectors, facial vectors, and Gale transforms. 
This sheds light on the connections between the various forms
of facial reduction related to Gale transforms.
Our highly successful empirics confirm the success of these approaches 
as we can solve huge problems of the order of 
$100,000$ nodes in approximately one minute to machine precision.
\\Our algorithm depends on identifying whether a principal submatrix of
the \EDM contains the corrupted element. We provide a theorem for doing
this that is related to the existing results for identifying \emph{yielding}
elements, i.e.,~we provide a characterization for guaranteeing the perturbed
\EDM  remains an \EDM with embedding dimension $d$. The characterization
is particularly simple in the $d=2$ case.
\\In addition, we characterize when the intuitive
approach of the nearest \EDM problem, \NEDMp,
solves our problem. In fact, we show that this happens if, and only if,
the original distance element is $0$, degenerate, and the perturbation is
negative.
}

{\bf Keywords:}
distance geometry, Euclidean distance matrices, 
Lindenstrauss operator, yielding interval, error correction, facial
reduction.

{\bf AMS subject classifications:}
51K05,  90C26, 90C46, 65K10, 15A48, 90C22

\tableofcontents
\addtocontents{loa}{\def\string\figurename{Algorithm}}
\listoffigures
\listoftables

\section{Introduction}
We consider error correction for a given 
\textdef{Euclidean distance matrix, \EDMp}, $D$, where the 
elements are the squared distances between $n$ 
points in $\R^d$ with $d$ the embedding dimension. 
For our problem, we know the embedding dimension, and
it is given that \emph{exactly one} distance is corrupted with
nonzero noise.
But we do not know the magnitude or position of the noisy distance. 
Thus our problem is a \EDM completion problem with known embedding
dimension where the position of the unknown element is not given.
This means that there is a hard combinatorial element to the problem.
We use three different \textdef{divide and conquer, \DCp}, 
approaches in combination with
three different versions of \textdef{facial reduction, \FRp}, 
one of which involves Gale transforms. These methods
solve this problem efficiently and accurately in all but two hard cases.
\index{\DCp, divide and conquer} 
\index{\FRp, facial reduction} 
\index{$\embdim(D)$, embedding dimension of $D$} 
\index{embedding dimension of $D$, $\embdim(D)$} 
The results are related to \textdef{yielding} in an \EDM
studied in~\cite{MR3842577}, i.e.,~characterizing when a perturbation 
of one element yields an \EDMp. We also show that the intuitive approach of
finding the nearest \EDM only works in the trivial case where the
original distance is $0$, degenerate, and the perturbation is negative.

The first \DC approach divides the matrix into two overlapping
principal submatrix blocks. We then either identify a block that
contains the corrupted element and continue with the smaller problem
for that block; otherwise we use \EDM completion to identify the
correct position and magnitude of the noise. The second approach divides the
matrix into the largest number of overlapping principal submatrix blocks 
with minimal overlap that allows for \EDM completion. Again, we either
identify one block to further subdivide the problem
or we complete the \EDM to locate the correct position and
identify the noise. The third
approach divides the matrix into principal submatrix blocks of
smallest size that can be completed and we again apply the divide and
conquer approach.

The three \DC approaches are used in combination with
three different types of facial reduction: 
(i) finding exposing vectors $Z_i$ for each  block and then using the
exposing vector $Z=\sum_i Z_i$ to
identify the face  $f = \Snp\cap Z^\perp \unlhd \Snp$; 
(ii) finding a \emph{facial vector,
\FV}, denoted $V_i$, for each block, and then finding the intersection
$\range(V) = \cap_i \range(V_i)$ efficiently so as
to identify the face $f = V\Sc^d_+V^T \unlhd \Snp$, of the
semidefinite cone $\Snp$;  
(iii) a combination of exposing and facial vectors
first finds a Gale matrix $N$ so that $Z=NN^T$ 
is an exposing vector and yields a
facial vector $V$ to identify the face $f$. Identifying the correct face
allows one to find the correct full rank factorization of the Gram
matrix to complete the \EDMp.

\index{\EDMp, Euclidean distance matrix}
\index{$\Snp$, positive semidefinite}
\index{positive semidefinite, $\Snp$}
\index{$\Sn$, symmetric matrices}
\index{symmetric matrices, $\Sn$}
\index{\FRp, facial reduction}
\index{$\FVp$, facial vector}
\index{facial vector, $\FVp$}
\index{$Z$, exposing vector}
\index{exposing vector, $Z$}
\index{$N$, Gale matrix}
\index{Gale matrix, $N$}

Included are several interesting results on: the commutativity of 
some operators for this distance geometry problem; and properties of
having exactly one noisy element.
Our main result is three successful algorithms that efficiently find the
position and magnitude of the corrupted distance. The algorithms scale
well. The best algorithm can
solve problems of the order of $100,000$ points in approximately one
minute, and to machine precision. With embedding dimension
$d$, the data is then size $100,000d$ with resulting $D\in \EDM$ of size
$n=100,000$. Theoretical difficulties arise when the points are
not in \emph{general position}. In fact, we prove that difficulties
arise only when all but at most $d+1$ points
are in a linear manifold of dimension $d-2$.

In the process of this study, we
review the relationships between three methods of facial reduction and
Gale transforms. In addition, we show that the only time that the
intuitive approach, the nearest \EDM problem,
solves our problem is the case where the original distance is $0$, and
the perturbation is negative, i.e.,~a trivial case.

\subsection{Related Literature}
The literature for \EDM and more generally \emph{distance geometry}
is vast and includes many surveys and books with many applications in
multiple areas of mathematics, engineering, health sciences.
Classical results on \EDM completion problems appear in
e.g.,~\cite{MR96g:15031,MR1321802,joh90,lau98,MR90m:15010} and
relate to applications in protein folding and
molecular conformations, e.g.,~\cite{MR2274505,MR2810897},
as well as sensor network localization,
e.g.,~\cite{MR2274505,kriswolk:09}.

Completing a partial \EDM using facial reduction, \FRp, 
and exploiting cliques and facial vectors was presented in
\cite{kriswolk:09}. This approach was extended to the noisy case in
\cite{ChDrWo:14} by exploiting the exposing vector representation for
\FRp. Other approaches that do not necessarily use
\FR are discussed in
e.g.,~\cite{AlfakihAnjosKPW:08,DiKrQiWo:08,KrPiWo:06} and the more
recent surveys \cite{Dokmanic_2015,surv} and the references therein.
The paper \cite{MR4666877} discusses perturbation analysis for \EDM
completion and convergence of algorithms under perturbations.
Matrix completion with noise and \emph{outliers} is 
discussed in~\cite{wong2017matrix}.
Further applications of \EDM for finding the so-called 
\emph{kissing number} are found in 
sphere packing that has further applications
to error correcting codes from the fields of communications.
A recent discussion on \EDM completion with noisy data is
in~\cite{denoiseIEEE2024}.

An application to finding a single node in the graph where the edges to
the node are known but can be noisy was done
in~\cite{SreWangWolk:17}. This is closely related to our problem and the
paper includes applications to real world problems, e.g.,~when the 
unknown node is a cellular phone and the other nodes are cellular towers.
Our application would consider the case when one tower is partially
obstructed and provides a noisy distance to the cellular phone.
Further results related to our problem is~\cite[Section 7.1]{alfm18}
that deals with the completion problem
for \emph{one} missing entry of an \EDMp. See also~\cite{MR1321802}.

Further, we relate \FR to Gale transforms. 
Details for Gale transorms are presented
in~\cite{Gale:56} and discussed further in e.g.,~\cite{MR1869424}.

\subsection{Outline and Main Results}
In \Cref{sect:backgrProb} we present the basic properties of \EDM and
the main problem description. 
This includes the relationship with
semidefinite programming, \SDP and the Gram matrix. Further definitions
and results are presented as needed.

In addition, in~\Cref{sect:NEDM} we show that the only case when the
nearest \EDM problem solves our problem is when the original element
(distance) is zero and the perturbation is negative.

In \Cref{sect:backgrndEDMnFR} we present the background on \FR and
results on commutativity involving the Lindenstrauss operator

In \Cref{sect:threemethods} we present the three-by-three algorithms:
\Cref{sect:twoblks} presents the bisection \DC with
facial reduction using exposing vectors, \BIEVp;
\Cref{sect:multblcks} presents the multi-block case with \FVp s, \MBFVp, 
and is based on an
efficient technique for finding the intersection of faces.
A proof of finite convergence for
this algorithm can be given based on the characterization of
so-called \textdef{restricted yielding} in~\Cref{thm:genposzerosgenerald}.
Finite convergence for the other approaches can be proved similarly.
\Cref{sect:Gale} presents an equivalent approach that uses
small blocks with Gale
transforms, \SBGTp. This section also highlights the interesting relationships,
equivalences, between facial reduction and Gale transforms.

In \Cref{sect:empricseff} we present a complexity analysis of the 
methods as well as present the empirical results. We solve small, large,
huge problems.

In \Cref{sect:hardcases} we discuss the two hard cases where the
algorithms can fail. We present
a characterization in~\Cref{thm:genposzerosgenerald}
for detecting one noisy distance.
Here we extend the results on \textdef{yielding} 
in~\cite[Section 7.2]{alfm18}, i.e.,~characterizing when the \EDM
property is preserved under a single perturbation. We provide a
characterization when a perturbation yields (preserves) not only
an \EDM but also maintains embedding dimension $d$. The result states
that all but two points are on a linear manifold of dimension
$d-2$; and this is particularly elegant for $d=2$. We include empirics
for the hard case.

Our concluding remarks are in~\Cref{sect:concl}.

\section{Problem Description, \EDM and \FRp, Theoretical Results}
\label{sect:backgrProb}
We consider matrices \textdef{$S \in \Sn$} the space of 
$n\times n$ symmetric matrices equipped with the \textdef{trace inner
product} $\langle S,T\rangle = \trace ST$; 
we use \textdef{$\diag(S)\in \Rn$} to denote 
the diagonal of $S$; the adjoint mapping is
\textdef{$\diag^*(v)=\Diag(v)\in \Sn$}.
For positive integers $j,k$, we let 
\textdef{$[k] = 1,2,\ldots,k$}; \textdef{$[j,k] = j,j+1,\ldots,k$}.
Moreover, we denote \textdef{$E_{jk} = e_je_k^T+e_ke_j^T$, unit matrix}.
For index subset $\alpha \subset [n]$ we let $D_\alpha $ 
denote the corresponding principal submatrix of $D$, and
\index{unit matrix, $E_{jk} = e_je_k^T+e_ke_j^T$}
\[
\cP_\alpha: \Sn \to \Sc^\alpha, \, \cP_\alpha(D) = D_\alpha,
\]
is the projection, or coordinate shadow, corresponding to $\alpha$.
We abuse notation by using $\Sc^\alpha$ rather than $\Sc^{|\alpha|}$ to
emphasize the actual coordinates used. The inverse image is 
\[
\textdef{$\cP_\alpha^{-1}(\bar D) = \{D\in \Sn : D_\alpha = \bar D\}$}. 
\]
The cone of positive semidefinite matrices is denoted $\Snp \subset
\Sn$, and we use $X\succeq 0$ for $X\in \Snp$. Similarly, for positive
definite matrices we use $\Snpp, X\succ 0$. 
\index{$\cP_\alpha(D) = D_\alpha$, coordinate shadow}
\index{coordinate shadow, $\cP_\alpha(D) = D_\alpha$}

\index{configuration matrix, $P$}
\index{$P$, configuration matrix}
For a set of $n$ points $p_i\in \R^d, i=1,\ldots,n$,
we denote the matrix of points, the configuration matrix,
$P=\begin{bmatrix} p_1 &
p_2 & \ldots & p_n \end{bmatrix}^T\in \R^{n\times d}$. 
The Euclidean distance matrix (of squared distances) is $D =
\left(\|p_i-p_j\|^2\right) \in \En\subset \Sn$. We denote the
closed convex \textdef{cone of \EDMp, $\En$}. 
Here $0<d=\embdim(D)$ is the embedding dimension of the \EDMp.
Without loss of generality, we can  translate the points so that
they are centered,
i.e., with \textdef{vector of ones, $e$}, we have $P^Te = 0$.
Note that $v:= \frac{1}{n}P^Te$ is the barycenter of the points.
The translation is then given by
\index{$\En$, cone of \EDMp} 
\begin{equation}
\label{eq:translateP}
	P^T \leftarrow P^T-ve^T.
\end{equation}
Our approach works well under the assumption that the points are in 
\textdef{general position},
i.e.,~no $d+1$ of them lie in a proper hyperplane; or, equivalently,
that every subset of $d+1$ points are affinely independent,
e.g.,~\cite{MR3842577}. 
This guarantees that our algorithms succeed as
it is based on identifying whether a Gram matrix corresponding to chosen
principal submatrices has the correct rank
$d$. However, we have modified our algorithm to allow for the case when
the general position assumption fails.

We denote the corresponding
\textdef{Gram matrix, $G=PP^T$}. Then the classical result of
Schoenberg~\cite{MR1501980} relates the matrix of squared distances, the
\textdef{Euclidean distance matrix, \EDMp}, 
with a Gram matrix by applying the \textdef{Lindenstrauss operator,
$\KK(G)$},
\begin{equation}
\label{eq:Lindens}
D = \KK(G) = \diag(G)e^T+e\diag(G)^T-2G\in \En.
\end{equation}
Moreover, this mapping is one-one and onto between the 
\textdef{centered subspace, $\SC$}, and \textdef{hollow subspace,
$\Sh$},
\index{$\SC$, centered} 
\index{$\Sh$, hollow} 
\[
\SC = \{X\in \Sn \,:\, Xe = 0\}, \quad
\Sh = \{X\in \Sn \,:\, \diag(X) = 0\}.
\]
We ignore the dimension $n$ when the meaning is clear.
Note that the centered assumption $P^Te=0 \implies G\in \SC$. Let 
\begin{equation}
\label{eq:defJ}
\textdef{$J = I - \frac 1n ee^T$} 
\end{equation}
be the orthogonal projection onto the
orthogonal complement $e^\perp$. Note that the translation 
in~\cref{eq:translateP} is equivalent to
\[
P\leftarrow JP = P -\frac 1n ee^TP.
\]
We let \textdef{$\offDiag$}$:\Sn\to \Sn$ 
denote the orthogonal projection onto $\Sh$, the 
hollow matrices. Then
the \textdef{Moore-Penrose generalized inverse, $\cK^\dagger$}, is
\index{$\cK^\dagger$, Moore-Penrose generalized inverse}
\[
\cK^\dagger (D) = -\frac 12 J\offDiag(D)J.
\]
\begin{lemma}
\label{lem:genpos}
For $D \in \En$ with $d=\embdim(D)$, 
the general position assumption is equivalent to
\begin{equation}
\label{eq:equivgenpos}
\alpha \subset [n], |\alpha|=k \geq d+1 \implies
\rank(\cK^\dagger(D_\alpha)) = d.
\end{equation}
\end{lemma}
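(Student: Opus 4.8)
The plan is to reduce \cref{eq:equivgenpos} to the standard identification, valid for a Euclidean distance matrix, of $\cK^\dagger$ with passage to the centered Gram matrix, combined with the observation that both sides are statements about affine dimensions of sub‑configurations of the points $p_1,\dots,p_n$.

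First I would record that a principal submatrix $D_\alpha$ of $D\in\En$ is again an \EDMp. Indeed, if $P\in\R^{n\times d}$ is any configuration with $D=\KK(PP^T)$ and $P_\alpha$ denotes the $|\alpha|\times d$ matrix of the rows of $P$ indexed by $\alpha$, then $(D_\alpha)_{ij}=\|p_i-p_j\|^2$ for $i,j\in\alpha$ gives $D_\alpha=\KK(P_\alpha P_\alpha^T)$, now with $\KK$ the $|\alpha|$‑dimensional Lindenstrauss operator. Since $\cK^\dagger\KK$ is the orthogonal projection onto $\SC$, namely $X\mapsto JXJ$, this yields $\cK^\dagger(D_\alpha)=J P_\alpha P_\alpha^T J=(JP_\alpha)(JP_\alpha)^T$, the Gram matrix of the recentered sub‑configuration $JP_\alpha$ (here $J$ has the appropriate order $|\alpha|$). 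Because recentering is a translation, $\rank\cK^\dagger(D_\alpha)=\rank(JP_\alpha)$ equals $\dim\aff\{p_i : i\in\alpha\}$, the affine dimension of the points indexed by $\alpha$. Taking $\alpha=[n]$ shows $\rank\cK^\dagger(D)=\embdim(D)=d$, so every sub‑configuration has affine dimension at most $d$.

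Granting this, both implications are short. For $(\Rightarrow)$: assume general position and fix $\alpha$ with $|\alpha|=k\ge d+1$; pick any $\beta\subseteq\alpha$ with $|\beta|=d+1$. By general position the points $\{p_i : i\in\beta\}$ are affinely independent, hence $\dim\aff\{p_i : i\in\beta\}=d$, so $\dim\aff\{p_i : i\in\alpha\}\ge d$; together with the upper bound $d$ from the embedding dimension this forces $\rank\cK^\dagger(D_\alpha)=d$. For $(\Leftarrow)$ I would argue by contraposition: if general position fails, some $\beta$ with $|\beta|=d+1$ consists of affinely dependent points, so $\rank\cK^\dagger(D_\beta)=\dim\aff\{p_i : i\in\beta\}\le d-1<d$, contradicting \cref{eq:equivgenpos} since $|\beta|=d+1\ge d+1$.

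I expect the only place requiring care is the first step: one must make explicit that restricting $D$ to a principal submatrix corresponds to restricting to the sub‑configuration $P_\alpha$, and that $\cK^\dagger$ returns the recentered Gram matrix whose rank is the affine dimension. After that the equivalence is pure bookkeeping about affine hulls, with no genuine obstacle. I would also note in passing that the same computation is exactly what the later algorithms exploit when they certify, block by block, that a chosen principal submatrix has a Gram matrix of the correct rank $d$.
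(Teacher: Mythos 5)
Your argument is correct. Note that the paper states \Cref{lem:genpos} without any proof, so there is nothing to compare against; your write-up supplies the missing argument in the natural way, namely by identifying $\rank\big(\cK^\dagger(D_\alpha)\big)=\rank(JP_\alpha)$ with the affine dimension of the sub-configuration $\{p_i: i\in\alpha\}$ via \cref{eq:projcentered}, and then translating both sides of the equivalence into statements about affine independence of $(d+1)$-point subsets. The only ingredients you use — $D_\alpha=\cK(P_\alpha P_\alpha^T)$, $\cK^\dagger\cK(X)=JXJ$, and monotonicity of affine hulls — are all available in \Cref{sect:backgrProb}, and both directions (the forward one via an affinely independent $(d+1)$-subset $\beta\subseteq\alpha$ plus the trivial upper bound $\le d$, and the converse by contraposition) are complete as written.
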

Moreover, we find the orthogonal projection $\cP_{\range(K^\dagger)}$ as
\index{orthogonal projection, $\cP_{\range(K^\dagger)}$}
\index{$\cP_{\range(K^\dagger)}$ orthogonal projection}
\begin{equation}
\label{eq:projcentered}
 \cK^\dagger \cK (S) = JSJ.
\end{equation}
Note that the orthogonal projection satisfies
$\cP_{\range(K^\dagger)}= \cP_{\SC}$.
Throughout we abuse notation and use $\cK,\cP,J$ without indicating the
dimension of the space that is involved.



\index{\EDMp, Euclidean distance matrix}
\index{$\Sn$, symmetric matrices}
\index{symmetric matrices, $\Sn$}
\index{$\KK(G)$, Lindenstrauss operator}

\subsection{Main Problem Description and Model}
\label{sect:mainprob}
We begin with a description of the problem.
\begin{problem}[Find magnitude/location of noise in \EDMp]
\label{prob:noise}
Let $\hat D\in \En$, the cone of \EDMp s, $d=\embdim(\hat D)$.
Let $1\leq i < j \leq n$ be indices and $\alpha \in \R$ and define the
data $D = \hat D + \alpha  E_{ij}$ with the unit matrix $E_{ij} = e_ie_j^T + e_je_i^T$. Then:
\begin{quote}
Find the position $i,\!j$ and the value of the noise $\alpha$ from the
given \EDM $D$ with one noisy element.
\end{quote}
\end{problem}

\begin{remark}[Naive Discrete Model]
The fact that we have a single noisy element and a known embedding
dimension leads to the following rank constrained mixed integer \MIP and
\SDPp:
\begin{equation}
\label{eq:MIPmodel}
(\MIP,\SDP) \quad \begin{array}{rcl}
p^* = & \min &  \|H\circ (\cK(G) - D)\|^2_F
\\    & \text{s.t.} & G\succeq 0, \, \rank(G) = d
\\  &&    \diag(H) = 0 
\\  &&    H \in \Sn \cap \{0,1\}^{n\times n}
\\  &&    e^THe = n^2-n-2,
\end{array}
\end{equation}
where $H\circ D$ denotes the Hadamard product and $H$ is the adjacency
matrix of the complete graph where one edge is missing as characterized
by the last constraint.
Since exactly one element is noisy, we know that the optimal value $p^*
= 0$ when the correct position $i,j$ is chosen for $H$.
This problem looks extremely difficult as we deal with binary variables
as well as semidefinite and rank constraints. Our approach in this paper
takes advantage of the structure to get extremely accurate solutions
very quickly, while avoiding the difficult nature of the hard discrete
problem~\cref{eq:MIPmodel}.
\end{remark}

\index{\MIPp, mixed integer program}
\index{mixed integer program, \MIPp}
\index{$H\circ D$, Hadamard product}
\index{Hadamard product, $H\circ D$}

\subsection{Nearest \EDMp, \NEDMp, 
Formulation and Solving Main {\Cref{prob:noise}}}
\label{sect:NEDM}
It is natural to  try and solve~\Cref{prob:noise} by finding the nearest
\EDMp, (\NEDM, see~\cref{eq:NEDM} below)
to the given noisy $D_n$, i.e.,~relax the MIP 
formulation~\cref{eq:MIPmodel}. This emphasizes the difficulty 
of~\Cref{prob:noise} when
the noisy data $D_n\in \En$. We now see that the \NEDM approach works only in
the case where the corrupted element arises from a zero distance in the
original $D_0\in \En$, i.e.,~the degenerate case of one point on top of
another.

\index{\NEDMp, nearest \EDMp}
\index{nearest \EDMp, \NEDMp}

Let $V$ be chosen so that $[V~e]$ is an orthogonal matrix and 
for $X\in \Sc^{n-1}$ let \textdef{$\cK_V(X) := \cK(VXV^T)$}, 
where $\cK$ is given in \cref{eq:Lindens}. Then
\[
\textdef{$\cK^*(D) = 2(\Diag(De)-D)$}, \,\,
\textdef{$\cK_V^*(D) = V^T\cK^*(D)V$},
\]
Moreover, 
\[
0 = VXV^T \iff 0 = V^TVXV^TV \iff X=0,
\]
and therefore, for $D = \cK(VXV^T)$ we get that $VXV^T$ is a centered
matrix by definition of $V$, and so $D$ is a hollow matrix and
\begin{equation}
\label{eq:kvkvsposdef}
X\neq 0 \implies \langle X, \cK_V^*( \cK_V(X) ) \rangle
 = \| \cK_V(X)  \|^2_F = \| D \|^2_F \neq 0 \implies
\cK_V^* \cK_V \succ 0,
\end{equation}
i.e.,~\textdef{$\cK_V^*( \cK_V(\cdot)) \succ 0$}, is a positive definite linear
operator. The facially reduced \NEDM then has a unique solution and is:
\begin{equation}
\label{eq:NEDM}
\textdef{$\bar X(D_n)$} =  \bar X = 
          \argmin_{X\succeq 0} \frac 12 \|\cK_V(X)-D_n\|_F^2.
\end{equation}

\index{normal cone, $\cN_{\Snp}(\bar X)$}

\begin{lemma}[{\textdef{$\cN_{\Snp}(\bar X)$, normal cone}}]
Let 
\[
\bar X = 
\begin{bmatrix}
Q_1 & Q_2
\end{bmatrix}
\begin{bmatrix}
\bar \Lambda & 0 \cr 0 & 0
\end{bmatrix}
\begin{bmatrix}
Q_1 & Q_2
\end{bmatrix}^T
 \in \Snp, \,  \bar \Lambda \in \Sc^r_{++},
\]
with $Q = \begin{bmatrix} Q_1 & Q_2 \end{bmatrix}$ orthogonal. Let $\lambda
\in \Rnp$ be the vector of eigenvalues of $\bar X$. Then the normal cone
\[
\begin{array}{rcl}
\cN_{\Snp} (\bar X) 
&=& 
Q\Diag\left(\cN_{\Rnp}(\lambda)\right)Q^T
\\&=&
\{X \in -\Snp : X\bar X = 0 \}
\\&=&
\left\{X \in -\Snp : X = 
\begin{bmatrix}
Q_1 & Q_2
\end{bmatrix}
\begin{bmatrix}
0 & 0 \cr 0 & \hat \Lambda 
\end{bmatrix}
\begin{bmatrix}
Q_1 & Q_2
\end{bmatrix}^T, \hat \Lambda \in -\Sc^{n-r}_+ \right\}
\\&=&
-\left(\Snp - \bar X\right)^+
\\&=&
-\face(\bar X)^c,  \quad (\text{the conjugate face}).
\end{array}
\]
\end{lemma}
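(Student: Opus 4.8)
The plan is to verify the five descriptions of $\cN_{\Snp}(\bar X)$ by traversing the list, beginning from the abstract definition of a normal cone and using self-duality of $\Snp$, then converting the orthogonality condition into a matrix-product condition, and finally reading off the block form from the spectral decomposition. Recall that for a closed convex cone $K$ and $x\in K$ the normal cone is $\cN_K(x)=\{X:\langle X,Y-x\rangle\le 0\ \forall Y\in K\}$; with $C^+:=\{X:\langle X,Y\rangle\ge 0\ \forall Y\in C\}$ the dual of a set $C$, this is literally $-(K-x)^+$, which is the fourth listed form for $K=\Snp$, $x=\bar X$. Testing the defining inequality at $Y=0$ and at $Y=2\bar X$ forces $\langle X,\bar X\rangle=0$ and $\langle X,Y\rangle\le 0$ for every $Y\succeq 0$; by self-duality of $\Snp$ the latter says $X\in-\Snp$, and the converse inclusion is immediate, so $\cN_{\Snp}(\bar X)=\{X\preceq 0:\langle X,\bar X\rangle=0\}$. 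For $X\preceq 0$ and $\bar X\succeq 0$, writing $-X=RR^T$ and $\bar X=SS^T$ gives $\langle X,\bar X\rangle=-\|R^TS\|_F^2$, hence $\langle X,\bar X\rangle=0\iff R^TS=0\iff X\bar X=0$; this is the second listed form.

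Next I would unpack $X\bar X=0$ using the spectral data. Since the columns of $Q_1$ span $\range(\bar X)$, the identity $X\bar X=0$ is equivalent to $XQ_1=0$, i.e.\ $X=Q_2MQ_2^T$ for a symmetric $M\in\Sc^{n-r}$, and $X\preceq 0$ iff $M=:\hat\Lambda\in-\Sc^{n-r}_+$; this is the third listed form. To pass to the first form, note that because $\bar\Lambda\succ 0$ the eigenvalue vector $\lambda$ has its first $r$ entries positive and its last $n-r$ entries zero, so $\cN_{\Rnp}(\lambda)=\{w\le 0:w_i=0,\ i\le r\}$ and $Q\Diag(\cN_{\Rnp}(\lambda))Q^T$ is exactly $\{Q_2\hat\Lambda Q_2^T:\hat\Lambda\in-\Sc^{n-r}_+\}$; one could alternatively invoke the general principle that the normal cone of a spectral set is the spectral set built from the normal cone of the underlying permutation-invariant set, but the direct computation suffices.

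Finally I would identify the conjugate face. The minimal face of $\Snp$ containing $\bar X$ is $\face(\bar X)=Q_1\Sc^r_+Q_1^T$ with span $Q_1\Sc^rQ_1^T$, so by definition $\face(\bar X)^c=\Snp\cap(\spn\face(\bar X))^\perp=\{X\succeq 0:Q_1^TXQ_1=0\}$; for $X\succeq 0$ the condition $Q_1^TXQ_1=0$ forces $XQ_1=0$, whence $\face(\bar X)^c=Q_2\Sc^{n-r}_+Q_2^T$ and negating returns the third form, giving $\cN_{\Snp}(\bar X)=-\face(\bar X)^c$. The chain is entirely standard, so I do not expect a real obstacle; the only two steps deserving a short justification are the equivalence $\langle X,\bar X\rangle=0\iff X\bar X=0$ for oppositely signed semidefinite matrices and the fact that a positive semidefinite matrix with a vanishing principal block has the corresponding rows and columns zero, both of which follow from a Cholesky-type factorization. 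Assembling these in the order above yields all five equalities simultaneously.
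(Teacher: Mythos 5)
Your proof is correct, but it takes a genuinely different route from the paper's. The paper disposes of the lemma in two sentences by quoting the projection characterization ($\bar X$ is the nearest point in $\Snp$ to $Y$ iff $Y-\bar X\in\cN_{\Snp}(\bar X)$) together with the Moreau decomposition $Y=Y_++Y_-$ with $Y_+,-Y_-\in\Snp$ and $Y_+Y_-=0$; setting $Y_+=\bar X$ identifies the normal cone with the set of admissible $Y_-$, which is the second and third listed forms. You instead argue from first principles: unfold $\cN_{\Snp}(\bar X)=-(\Snp-\bar X)^+$, test the defining inequality at $Y=0$ and $Y=2\bar X$ to force $\langle X,\bar X\rangle=0$ and (by self-duality) $X\preceq 0$, and upgrade $\trace(X\bar X)=0$ to $X\bar X=0$ by factoring both matrices. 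Your version is more self-contained — it proves the Moreau-type orthogonality rather than citing it — and it is the only one of the two that explicitly addresses all five forms, including $Q\Diag(\cN_{\Rnp}(\lambda))Q^T$ and the conjugate face, which the paper's proof leaves implicit. One caveat on the first form: for a \emph{fixed} orthogonal $Q$, the set $Q\Diag\left(\cN_{\Rnp}(\lambda)\right)Q^T$ contains only those elements of $Q_2\left(-\Sc^{n-r}_+\right)Q_2^T$ that are diagonal in the chosen basis, so your claim that the two sets are ``exactly'' equal holds only if one ranges over all orthogonal diagonalizers of $\bar X$ (the spectral transfer principle you allude to). This is really a looseness in the lemma's statement rather than a flaw in your argument, but the sentence should be softened or the union over diagonalizers made explicit. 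Everything else checks out.
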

\begin{proof}
The proof follows from using the Eckart-Young-Mirsky,
e.g.,~\cite{EckartYoung:36}, and the Moreau decomposition, e.g.,~\cite{Mor:62}, 
theorems. That is $\bar X$
is the nearest point in $\Snp$ to a given $Y\in \Sn$ if, and only if, $Y-\bar X
\in \cN_{\Snp}(\bar X)$, and $Y$ has an orthogonal decomposition 
\[
Y = Y_++Y_-, \,\, Y_+,-Y_-\in \Snp, \,\, Y_+Y_- = 0,
\]
using the projections $Y_+,Y_-$ onto the nonnegative and nonpositive
semidefinite cones, respectively. Therefore $Y-Y_+ =  Y_-, Y_+=\bar X$
yields the result. Note that  the Eckart-Young-Mirsky theorem
characterizes the projections as being the same as using the Moreau
decomposition.
\end{proof}

We now present a characterization for when the nearest point finds the
original \EDM after a single element is corrupted. Note that for our
problem the $i,j$ are not known.
We first introduce the notion of a face and some its properties. 
For a given convex cone $K$, a convex cone $f\subset K$ 
is a \textdef{face of $K$}, denoted $f\unlhd K$, if 
\[
X,Y\in K, X+Y\in f \implies X,Y\in f.
\]
Given $f\unlhd K$, the \textdef{conjugate face, $f^c = K^+ \cap f^\perp$}.
For $X\in K$, we denote \textdef{$f=\face(X)$, minimal face of $K$},
i.e.,~the intersection of all faces that contain $X$.
For $X\in \Snp$ with spectral decomposition 
$X = \begin{bmatrix} V & U \end{bmatrix}
\begin{bmatrix} \Lambda & 0 \cr 0 & 0 \end{bmatrix}
\begin{bmatrix} V & U \end{bmatrix}^T,\,\Lambda \in \Sc^r_{++}$, we get
\[
\face(X) = V\Sc_+^r V^T, \, \face(X)^c = U\Sc_+^{n-r} U^T.
\]
See e.g.,~\cite{DrusWolk:16}.

\index{minimal face of $K$, $f=\face(X)$}
\index{$f^c = K^c \cap f^\perp$, conjugate face}
\begin{theorem}
\label{thm:charactNEDMequiv}
Suppose that $D_0\in \En$, the perturbation matrix is $E_{ij}\in \Sn, i<j$, 
and $D_n = D_0+\alpha E_{ij}\notin \En$. 
Let
\[
\begin{array}{rcl}
Y:=\cK_V^*(E_{ij}) 
&=&
 V^T K^*(E_{ij}) V
\\&=&
2 \begin{bmatrix}
v_i^Tv_i+ v_j^Tv_j -v_i^Tv_j -v_j^Tv_i
\end{bmatrix},
\end{array}
\]
where $v_i$ is the $i$-th row of $V$.  Then $\alpha \neq 0$, 
$0\neq Y\succeq 0$, and
\begin{equation}
\label{eq:X0conditionopt}
\begin{array}{rcl}
\text{$D_0$ is the nearest \EDM to $D_n$}
&\iff&
D_0 = \cK_V(\bar X), \text{ for $\bar X$ from~\cref{eq:NEDM}},
\\&\iff&
X_0:=\cK_V^\dagger(D_0) \in Y^c := \Sc^{n-1}_+\cap Y^\perp, 
\text{  and  } \alpha < 0,
\\&\iff&
X_0:=\cK_V^\dagger(D_0) \in Y^c := \Sc^{n-1}_+\cap Y^\perp, 
\,\, \forall \alpha < 0,
\\&\iff&
(D_0)_{ij} = 0, \,\, \text{  and  } \alpha < 0.
\end{array}
\end{equation}

\end{theorem}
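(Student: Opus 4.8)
The plan is to establish the two preliminary assertions and then verify the displayed chain of equivalences in~\cref{eq:X0conditionopt} link by link, the main tool being the normal-cone formula of the preceding lemma (applied with $n$ replaced by $n-1$) together with adjointness of $\cK_V$. For the preliminaries: $\alpha\neq 0$ because $D_n=D_0+\alpha E_{ij}\notin\En$ while $D_0\in\En$; and since $\cK^*(E_{ij})=2\big(\Diag(E_{ij}e)-E_{ij}\big)=2(e_i-e_j)(e_i-e_j)^T$, we get $Y=V^T\cK^*(E_{ij})V=2\,ww^T$ with $w:=V^T(e_i-e_j)$, so $Y\succeq 0$; moreover $\|w\|^2=(e_i-e_j)^TVV^T(e_i-e_j)=(e_i-e_j)^TJ(e_i-e_j)=2$ because $e^T(e_i-e_j)=0$, hence $w\neq 0$ and $Y\neq 0$.

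The first equivalence is essentially a change of variables. Since a symmetric matrix with range in $e^\perp$ is exactly $VXV^T$ with $X$ symmetric, the map $\cK_V$ carries $\Sc^{n-1}_+$ bijectively onto $\En$ (using Schoenberg's characterization and injectivity from~\cref{eq:kvkvsposdef}); therefore minimizing $\|D-D_n\|_F$ over $D\in\En$ is the same, under $D=\cK_V(X)$, as minimizing $\tfrac12\|\cK_V(X)-D_n\|_F^2$ over $X\succeq 0$, and by~\cref{eq:kvkvsposdef} this objective is strictly convex with unique minimizer $\bar X$. Writing $X_0:=\cK_V^\dagger(D_0)\succeq 0$, so that $\cK_V(X_0)=D_0$, we conclude that $D_0$ is the nearest \EDM to $D_n$ iff $X_0=\bar X$, i.e.~iff $D_0=\cK_V(\bar X)$.

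The central step is the second equivalence, which I would obtain from the first-order optimality condition for~\cref{eq:NEDM}, namely $X_0=\bar X$ iff $-\nabla f(X_0)\in\cN_{\Sc^{n-1}_+}(X_0)$ with $f(X)=\tfrac12\|\cK_V(X)-D_n\|_F^2$. Here $\nabla f(X_0)=\cK_V^*\big(\cK_V(X_0)-D_n\big)=\cK_V^*(D_0-D_n)=-\alpha\,\cK_V^*(E_{ij})=-\alpha Y$, so the condition reads $\alpha Y\in\cN_{\Sc^{n-1}_+}(X_0)$, which by the preceding lemma means $\alpha Y\preceq 0$ and $(\alpha Y)X_0=0$. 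Since $0\neq Y\succeq 0$ and $\alpha\neq 0$, the first requirement is precisely $\alpha<0$; and then $\alpha YX_0=0\iff YX_0=0\iff\langle Y,X_0\rangle=0$ (both factors being positive semidefinite) $\iff X_0\in Y^\perp$, which with $X_0\succeq 0$ is $X_0\in Y^c$. This gives the second equivalence, and the third is just the remark that $X_0\in Y^c$ does not involve $\alpha$, so whenever it holds the second equivalence yields that $D_0$ is the nearest \EDM to $D_0+\alpha E_{ij}$ for every $\alpha<0$.

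For the fourth equivalence I would collapse $X_0\in Y^c$ to a scalar identity using only adjointness and hollowness: since $D_0\in\En\subset\Sh=\range(\cK_V)$ we have $\cK_V\big(\cK_V^\dagger(D_0)\big)=D_0$, whence $\langle Y,X_0\rangle=\langle \cK_V^*(E_{ij}),\cK_V^\dagger(D_0)\rangle=\langle E_{ij},\cK_V(\cK_V^\dagger(D_0))\rangle=\langle E_{ij},D_0\rangle=2(D_0)_{ij}$. As $Y,X_0\succeq 0$ and $X_0\succeq 0$ is automatic, $X_0\in Y^c\iff\langle Y,X_0\rangle=0\iff(D_0)_{ij}=0$, and appending $\alpha<0$ on both sides closes the chain. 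I expect the only real obstacle to be the bookkeeping in the second step: correctly transporting the \NEDM stationarity condition through $\cK_V$ and $\cK_V^*$ and matching the sign convention of $\cN_{\Sc^{n-1}_+}$ from the preceding lemma; every other step is a one-line adjoint computation.
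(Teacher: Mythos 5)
Your proof is correct and follows essentially the same route as the paper: the first-order optimality condition for the strongly convex problem~\cref{eq:NEDM}, reduction to the normal-cone/conjugate-face condition $YX_0=0$ with $\alpha Y\preceq 0$, and the adjoint computation $\langle Y,X_0\rangle=\langle E_{ij},D_0\rangle=2(D_0)_{ij}$. Your derivation of $\alpha<0$ directly from $\alpha Y\in\cN_{\Sc^{n-1}_+}(X_0)$ with $0\neq Y\succeq 0$, and the explicit rank-one form $Y=2ww^T$ with $\|w\|^2=2$, are in fact slightly cleaner than the paper's ``we now observe that necessarily $\alpha\leq 0$'' and its Laplacian-congruence remark.
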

\begin{proof}
We consider the nearest \EDM problem in~\cref{eq:NEDM}
with $X_0 =  \cK_V^\dagger(D_0)$ as the original data.
We want to characterize $D_0$, or equivalently $X_0$, so that the
optimum $\bar X$  in~\cref{eq:NEDM} equals $X_0$, i.e,~the \NEDM problem
solves our~\Cref{prob:noise}.

First we note from~\cref{eq:kvkvsposdef} that 
\textdef{$\cK_V^* \cK_V \succ 0$} so that the objective 
in~\cref{eq:NEDM} is strongly convex and we have a unique optimum.
The optimality conditions for $X_0$ for the nearest point
problem are that the gradient of the objective function is in the polar
cone (negative normal cone) of the \SDP cone at $X$, i.e.,~we have the
equivalences
\[
\begin{array}{rcl}
\cK_V^*(\cK_V(X_0)-D_n)  \in (\Sc^{n-1}_+ - X_0)^+
&\iff &
\cK_V^*(\cK_V\cK_V^\dagger (D_0)-D_n)  \in (\Sc^{n-1}_+ - X_0)^+
\\&\iff &
\cK_V^*(D_0-D_n)  \in (\Sc^{n-1}_+ - X_0)^+
\\&\iff &
\alpha\cK_V^*(-E_{ij})  \in (\Sc^{n-1}_+ - \cK^\dagger_V(D_0))^+
\\&\iff &
-\alpha\cK_V^*(E_{ij})  \in (\Sc^{n-1}_+ - \cK^\dagger_V(D_0))^+.
\end{array}
\]

Note that $\range(\cK) = \range(\cK_V) = \Sh$. Therefore $\Null(\cK^*_V)
= (\Sh)^\perp = \range(\Diag)$. As $E_{ij}$ is not diagonal, we get
$\cK_V^*(E_{ij})\neq 0$. Alternatively,
we recall that the choice of $V$ implies $\bar V := \begin{bmatrix}
V & \frac 1{\sqrt n} e\end{bmatrix}$ is orthogonal. Therefore, the
rows satisfy 
\[
v_i^Tv_j + \frac 1n = 0, v_i^Tv_i + \frac 1n = 1 \implies
v_i^Tv_i+ v_j^Tv_j -v_i^Tv_j -v_j^Tv_i = 2\left(1-\frac 1n +\frac
1n\right) = 2\neq 0,
\]
i.e.,~we see again that $\cK_V^*(E_{ij})\neq 0$. 

We now observe 
that necessarily $\alpha \leq 0$. Without loss of generality, we assume
$\alpha < 0$.  To have the nearest matrix be the original data $D_0,X_0$, 
we need to have
\[
\begin{array}{rcl}
\cK_V^*(E_{ij})  
&\in & 
(\Sc^{n-1}_+ - \cK^\dagger_V(D_0))^+ 
\\&=&
(\Sc^{n-1}_+ - X_0)^+
\\&=&
 \{X \in \Sc^{n-1}_+ : XX_0 = 0 \}
\\&=&
\face(X_0)^c,
\end{array}
\]
i.e.,~the conjugate face to $\face(X_0)$.

\index{conjugate face, $\face(X_0)^c$}
\index{$\face(X_0)^c, $conjugate face}

Next we see the equivalence using $v_i$, the rows of $V$:
\[
\begin{array}{rcll}
Y=\cK_V^*(E_{ij}) 
&=&
 V^T K^*(E_{ij}) V
\\&=&
2V^T (\Diag(E_{ij}e) - E_{ij})V, &  (\succeq 0,
      \text{  since congruence of a Laplacian matrix})
\\&=&
2 V^T (e_ie_i^T + e_je_j^T - e_ie_j^T + e_je_i^T)V  
\\&=&
2 \begin{bmatrix}
v_i^Tv_i+ v_j^Tv_j
-v_i^Tv_j -v_j^Tv_i
\end{bmatrix}.
\end{array}
\]
Since $Y, X_0\in \Sc^{n-1}_+$, we have 
\[
Y\in \face(X_0)^c \iff 
X_0 \in \face(Y)^c \iff YX_0 = 0  \iff \trace YX_0 = 0.
\]
Therefore, the first two equivalences in~\cref{eq:X0conditionopt}
follow as $D_0 = K_V(X_0)$.

Finally, we note that the conjugate face condition is equivalent to
\[
 0= \trace X_0Y = \trace X_0 V^T K^*(E_{ij}) V
 =\trace \cK(VX_0 V^T) E_{ij} = \trace D_0E_{ij}.
\]
Therefore, we can only have a negative perturbation from $(D_0)_{ij}=0$.
\end{proof}

\subsection{Results on \EDMp, \FRp}
\label{sect:backgrndEDMnFR}
We now present know results for \EDM and \FR as well as new results
related to our specific problem.
In particular,~\Cref{lem:CommutePK} provides a useful commutativity
result for $\cK,\cP_\alpha$, while~\Cref{lem:sumexposedvecs} recalls the
result that a sum of exposing vectors is an exposing vector.
\Cref{thm:FRone} shows how to find a (centered) \FV for the face
corresponding to a given principal submatrix $D_\alpha$. 
Then~\Cref{lem:twocliquered} illustrates an efficient and accurate way
to find the intersection of two given faces by finding a \FV from the
corresponding two given \FVp s. 

The results here use the \emph{general position} assumption.
Further results are in~\Cref{thm:genposzerosgenerald} that
characterizes when the perturbation of a single element in
an \EDM with embedding dimension $d$
can still be a \EDM with embedding dimension $d$. This is important for
our general algorithm in identifying the location of the corrupted element in
the \EDMp.

\subsubsection{Commutativity of $\cP_\alpha,\cK$}
We first include a useful and very interesting
observation about the commutativity of $\cK,
\cP_\alpha, \alpha \subset [n]$.
\begin{lemma}
\label{lem:CommutePK}
Let $M\in\Rnn, \alpha \subset [n]$. Then, by abuse of notation on the
dimensions of $e$ and the transformations $\cK,\cP_\alpha$, we get
\[
\cP_\alpha \cK (M) = \cK \cP_\alpha(M).
\]
\end{lemma}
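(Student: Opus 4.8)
The plan is a direct entrywise verification, using the explicit formula for $\cK$ in~\cref{eq:Lindens} and the observation that restricting to a principal submatrix commutes with each of the three operations out of which $\cK$ is built: diagonal extraction, forming outer products with $e$, and scalar multiplication. So there is no deep content here — the point of the lemma is really the clean bookkeeping of dimensions and the ``abuse of notation'' on $e$.

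First I would record the elementary fact that $\diag$ intertwines with $\cP_\alpha$: for $M\in\Rnn$ and $i\in\alpha$ we have $(\cP_\alpha M)_{ii}=M_{ii}$, so the vector $\diag(\cP_\alpha M)\in\R^{|\alpha|}$ is precisely the subvector of $\diag(M)\in\Rn$ indexed by $\alpha$; write this as $\diag(M_\alpha)=(\diag M)_\alpha$. Next, for any $v\in\Rn$ the rank-one matrix $ve^T$ has $(ve^T)_{ij}=v_i$, hence $\cP_\alpha(ve^T)=v_\alpha e^T$, where the $e$ on the right is now the all-ones vector of length $|\alpha|$ — this is exactly the point at which the asserted abuse of notation on the dimension of $e$ appears. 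By symmetry $\cP_\alpha(ev^T)=ev_\alpha^T$. Finally $\cP_\alpha$ is linear and $\cP_\alpha(M)=M_\alpha$ by definition.

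Combining these, I apply $\cP_\alpha$ term by term to $\cK(M)=\diag(M)e^T+e\diag(M)^T-2M$ to get
\[
\cP_\alpha\cK(M)=(\diag M)_\alpha e^T+e\,(\diag M)_\alpha^T-2M_\alpha=\diag(M_\alpha)e^T+e\,\diag(M_\alpha)^T-2M_\alpha=\cK(M_\alpha)=\cK\cP_\alpha(M),
\]
where the middle equality uses the intertwining fact and all operators/vectors on the right now live on $\R^{|\alpha|}$ (resp. $\Sc^\alpha$), matching the statement. Every step is an identity, so the only thing needing care is the dimension bookkeeping; if a coordinate-free presentation is preferred one can instead let $S_\alpha$ be the $0$–$1$ selection matrix with $\cP_\alpha(M)=S_\alpha M S_\alpha^T$, note $S_\alpha e_n=e_{|\alpha|}$ and $S_\alpha\Diag(v)S_\alpha^T=\Diag(S_\alpha v)$, and push $S_\alpha(\cdot)S_\alpha^T$ through the definition of $\cK$. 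The entrywise check above is shortest and I would use it.
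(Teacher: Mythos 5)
Your proposal is correct and is essentially the paper's own argument: both rest on the identity $\diag(\cP_\alpha(M))=\cP_\alpha(\diag(M))$ and then apply $\cP_\alpha$ term by term to the definition of $\cK$ in~\cref{eq:Lindens}. Your additional remarks on the rank-one terms $ve^T$ and the selection-matrix formulation just make the dimension bookkeeping explicit; nothing differs in substance.
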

\begin{proof}
We note that $\cP_\alpha(\diag(M)) = \diag(\cP_\alpha(M))$. Therefore,
\[
\begin{array}{rcl}
\cP_\alpha \cK (M) 
&=&
  \cP_\alpha \left( \diag(M)e^T + e\diag(M) -2M\right)
\\&=&
   \diag(\cP_\alpha (M))e^T + e(\diag(\cP_\alpha M)) -2\cP_\alpha\left(M\right)
\\&=&
 \cK \cP_\alpha(M).
\end{array}
\]
\end{proof}

We note that symmetry for $M$ is not needed. Moreover, the commutativity
reveals information on the eigenspace of $\cK$, for if it was an
operator we would  have joint diagonalization with $\cP$.

\subsubsection{Facial Reduction}
\index{face of $\Snp$, $f\unlhd \Snp$}
\index{$f\unlhd \Snp$, face of $\Snp$}
We follow notation and definitions in~\cite{DrusWolk:16,alfm18}. First
we recall that both $\Snp, \En$ are closed convex cones in $\Sn$.
We let $W,V$ be two full column rank matrices that satisfy
\[
\range(V) = \range(X), \, \range(W) = \nul(X),\,  Z=WW^T, \, d = \rank(X).
\]
Then $X\in \relint (f)$ and we have two representations for $f$:
\[
f = V\Sc^d_+V^T = \Snp \cap Z^\perp.
\]
We call $V,Z$ a facial and exposing vector, respectively.
\index{$V$, facial vector} 
\index{facial vector, $V$} 

If we choose the facial vector to satisfy 
$\begin{bmatrix} V & e \end{bmatrix}$ nonsingular and $V^Te = 0$, then we
can characterize the face of centered Gram matrices
\[
\SC\cap \Snp = V\Sc^{n-1}_+V^T = \cK^\dagger (\En) \unlhd \Snp.
\]
This is used in~\cite{AlKaWo:97} to regularize the \EDM completion
problem using $\cK_V(X) := \cK(VXV^T)$, i.e.,~$\cK_V : \Sc_+^{n-1} \to
\En$ and strict feasibility is satisfied for \EDM completion problems.
Here $V$ is a \textdef{centered facial vector}, i.e.,~$V^Te=0$. Since we
work with centered Gram matrices, we often use centered facial
vectors below.

Our divide and conquer methods use principal submatrices $D_\alpha$ of
$D$ and corresponding Gram matrices, with
ordered integers $\alpha = [i,i+k], 1\leq i\leq i+k\leq n$. The first \FR
method finds exposing vectors for each submatrix and then adds
these up to get an exposing vector for the entire Gram matrix $G$,
i.e.,~we exploit~\Cref{lem:sumexposedvecs}. Note that if the overlap of
the exposing vectors is deficient, then it is not necessarily true that
$Z$ is a maximum rank exposing vector.
\begin{lemma}[\cite{DrusWolk:16}]
\label{lem:sumexposedvecs}
Let 
\[
G\in \Snp, \, Z_i  \in \Snp, \trace (GZ_i) = 0, i = 1,\ldots,k.
\]
Then $Z=\sum_{i=1}^k Z_i \succeq 0$ and  $GZ = 0$.
\end{lemma}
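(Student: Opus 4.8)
The plan is to prove \Cref{lem:sumexposedvecs} directly from the definition of the positive semidefinite cone and an elementary trace argument. The statement has two parts: that $Z = \sum_{i=1}^k Z_i \succeq 0$, and that $GZ = 0$. The first part is immediate: the sum of positive semidefinite matrices is positive semidefinite, since for any $x$ we have $x^TZx = \sum_i x^TZ_ix \geq 0$. So the content is in the second part.

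The key observation is that for two positive semidefinite matrices $A, B$, the condition $\trace(AB) = 0$ forces $AB = 0$. I would establish this as the main step. First note $\trace(GZ) = \trace(G\sum_i Z_i) = \sum_i \trace(GZ_i) = 0$, using linearity of the trace and the hypothesis $\trace(GZ_i) = 0$ for each $i$. Combined with $G \succeq 0$ and $Z \succeq 0$ (just proved), I can invoke the fact that $\trace(GZ) = 0$ with both factors PSD implies $GZ = 0$. To see this fact, write $G = G^{1/2}G^{1/2}$ with $G^{1/2} \succeq 0$; then $0 = \trace(GZ) = \trace(G^{1/2}G^{1/2}Z) = \trace(G^{1/2}ZG^{1/2}) = \|Z^{1/2}G^{1/2}\|_F^2$ (using cyclicity of trace and $Z = Z^{1/2}Z^{1/2}$). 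Hence $Z^{1/2}G^{1/2} = 0$, and multiplying by $Z^{1/2}$ on the left and $G^{1/2}$ on the right gives $ZG = 0$, so also $GZ = (ZG)^T = 0$.

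Alternatively, and perhaps cleaner for the writeup, I would argue via ranges and kernels: $\trace(GZ_i) = 0$ with $G, Z_i \succeq 0$ means $\range(Z_i) \subseteq \nul(G)$ for each $i$ (each eigenvector of $Z_i$ with positive eigenvalue must lie in $\nul(G)$, else its contribution to $\trace(GZ_i)$ would be strictly positive). Therefore $\range(Z) = \range(\sum_i Z_i) \subseteq \nul(G)$ as well, since $\range(\sum_i Z_i) = \sum_i \range(Z_i)$ for PSD summands. This immediately yields $GZ = 0$.

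I do not anticipate a genuine obstacle here — this is a standard and short lemma, cited from \cite{DrusWolk:16}. The only thing to be careful about is justifying the step $\range(\sum_i Z_i) = \sum_i \range(Z_i)$ for positive semidefinite $Z_i$ (the inclusion $\subseteq$ is trivial; the reverse follows because if $x \in \range(Z_j)$ then writing $x = Z_j y$ and noting $\sum_i Z_i \succeq Z_j$ one checks $x \in \range(\sum_i Z_i)$, or more simply one uses that $\nul(\sum_i Z_i) = \cap_i \nul(Z_i)$ for PSD matrices and takes orthogonal complements). Either route is a couple of lines.
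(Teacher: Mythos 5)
Your proof is correct. The paper itself gives no proof of this lemma --- it is simply quoted from the cited reference --- so there is nothing to compare against; your argument (sum of PSD matrices is PSD, $\trace(GZ)=\sum_i\trace(GZ_i)=0$ by linearity, and then $\trace(GZ)=0$ with $G,Z\succeq 0$ forces $GZ=0$ via $0=\trace(GZ)=\|Z^{1/2}G^{1/2}\|_F^2$) is the standard and complete one. The alternative kernel/range argument you sketch is equally valid and arguably illuminates why the result matters for facial reduction, namely that $\range(Z_i)\subseteq\nul(G)$ for each $i$.
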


The second \FR method uses adjacent pairs of principal submatrices in
order to do \FR by intersecting pairs of faces.
We first consider the representation of \FR for a single principal
submatrix, i.e.,~the representation of the smallest face obtained using
the Gram matrix corresponding to $D_\alpha$.
Without loss of generality we use the first, top left,
principal submatrix. We modify the notation in~\Cref{thm:FRone}
to match the notation herein. Recall that $\cP^{-1}_\alpha$ denotes the 
projection inverse image.
\index{$\cP^{-1}_\alpha$, projection inverse image}
\index{projection inverse image, $\cP^{-1}_\alpha$}
\begin{theorem}[{\cite[Theorem 2.3]{kriswolk:09}}]
\label{thm:FRone}
Let 
\[
D\in \En, d = \embdim(D),\, \alpha =[k],\, \,
\bar D = D_\alpha, t=\embdim(\bar D).
\]
Let 
\[
\bar G= \cK^\dagger (\bar D) = \bar U_G S \bar U_G^T, \, S\in
\Sc^t_{++},
\]
where
\[
\bar U_G\in \R^{k\times t}, \, \bar U_G^T\bar U_G = I_t, \,\bar
U_G^Te=0.
\]
Furthermore, let 
$U_G := \begin{bmatrix} \bar U_G & \frac 1{\sqrt k} e \end{bmatrix}$,
$U := \begin{bmatrix} U_G & 0\cr 0 & I_{n-k} \end{bmatrix}$,
and 
$\begin{bmatrix} V & \frac 1{\|U^Te\|} U^Te \end{bmatrix}
\in \R^{n-k+t+1 \times n-k+t+1}$ be orthogonal. Then
\[
\face \left( \cK^\dagger \left(\cP_\alpha^{-1} (\bar D)\cap \En
  \right)\right) = \left(U\Sc_+^{n-k+t+1}U^T\right)\cap \SC =
\left((UV)\Sc_+^{n-k+t}(UV)^T\right).
\] 

\end{theorem}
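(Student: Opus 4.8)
\textbf{Proof proposal for \Cref{thm:FRone}.}
The plan is to track how imposing the constraint $D_\alpha = \bar D$ on an \EDM $D\in\En$ translates, under $\cK^\dagger$, into a face of $\Snp$, and then to identify that face both as an intersection with the centered subspace $\SC$ and as a facial-vector representation. First I would work on the Gram side: any $D\in \cP_\alpha^{-1}(\bar D)\cap\En$ has $\cK^\dagger(D)\in\SC\cap\Snp$, and by \Cref{lem:CommutePK} the principal submatrix satisfies $\cK^\dagger(D)_\alpha$ related to $\bar G=\cK^\dagger(\bar D)$ after the appropriate centering — the subtlety is that $\cP_\alpha$ and $\cK^\dagger$ do not commute as cleanly as $\cP_\alpha$ and $\cK$ do, because $\cK^\dagger$ involves the double centering $J(\cdot)J$ in dimension $n$ versus dimension $k$. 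So the first real step is to show that the top-left $k\times k$ block of any centered Gram matrix $G$ with $\cK(G)\in\cP_\alpha^{-1}(\bar D)$ has column space contained in $\range(\bar U_G)=\range(\bar G)$; equivalently its nullspace contains $\nul(\bar G)\cap e^\perp$ augmented suitably. This is where the vectors $\bar U_G^Te=0$ and the choice $U_G=[\bar U_G\ \tfrac1{\sqrt k}e]$ enter: $U_G$ is an orthonormal basis adapted to $\range(\bar G)\oplus\spn(e)$ in $\R^k$.

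Next I would build the exposing/facial vector for the whole space. The matrix $U=\begin{bmatrix}U_G&0\\0&I_{n-k}\end{bmatrix}$ has the property that $\range(U)$ is exactly the set of directions available to $G$: in the first $k$ coordinates $G$ is confined to $\range(\bar G)+\spn(e)$, and in the remaining coordinates $G$ is unconstrained. Hence $\face(\cK^\dagger(\cP_\alpha^{-1}(\bar D)\cap\En))\subseteq U\Sc_+^{n-k+t+1}U^T$, and the reverse inclusion follows by exhibiting, for a generic $G$ in $U\Sc_+^{n-k+t+1}U^T\cap\SC$, an actual \EDM completion with the right submatrix — this is essentially the strict-feasibility / maximal-rank construction from \cite{kriswolk:09,AlKaWo:97}, applied to a face. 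Intersecting with $\SC$ then cuts the dimension by one (removing the $e$ direction), which is why the rank drops from $n-k+t+1$ to $n-k+t$; concretely, $U^Te\neq 0$ lies in $\range(U)$, so $U\Sc_+^{n-k+t+1}U^T\cap\SC = U\bigl(\Sc_+^{n-k+t+1}\cap (U^Te)^\perp\bigr)U^T$, and choosing $V$ with $[\,V\ \tfrac{1}{\|U^Te\|}U^Te\,]$ orthogonal gives $\Sc_+^{n-k+t+1}\cap(U^Te)^\perp = V\Sc_+^{n-k+t}V^T$, so the face is $(UV)\Sc_+^{n-k+t}(UV)^T$.

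I would organize the writeup as: (1) reduce to the Gram picture via $\cK^\dagger\cK(S)=JSJ$ from \cref{eq:projcentered} and note $\cK^\dagger(\En)=\SC\cap\Snp$; (2) characterize the column-space constraint on the $\alpha$-block of a centered $G$ coming from $G_\alpha$ having the prescribed $\cK^\dagger$-image, giving the $\subseteq$ inclusion into $U\Sc_+^{n-k+t+1}U^T$; (3) prove the matching $\supseteq$ by an explicit completion showing a relative-interior point exists, so the minimal face is exactly this set; (4) carry out the routine linear-algebra bookkeeping to pass from the $U$-representation intersected with $\SC$ to the $(UV)$-representation, using that $U^Te\in\range(U)$. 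The main obstacle is step (2)–(3): the clean commutation \Cref{lem:CommutePK} is for $\cK$, not $\cK^\dagger$, so one must argue carefully that the constraint "$\cK^\dagger(D_\alpha)=\bar G$ has rank $t$ with range $\range(\bar U_G)$" is equivalent, after the global centering, to "$G$ restricted to the first $k$ coordinates has range inside $\range(U_G)$" — and then that nothing further is forced on the off-diagonal $\alpha\times\alpha^c$ blocks beyond what $\range(G)\subseteq\range(U)$ already encodes. The general position hypothesis is not needed here; it is what guarantees $t=\min(k-1,d)$ in applications, but the face identity holds for whatever $t=\embdim(\bar D)$ turns out to be.
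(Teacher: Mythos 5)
The paper does not actually prove \Cref{thm:FRone}; it imports the statement verbatim from \cite[Theorem~2.3]{kriswolk:09}, so there is no in-paper proof to compare against. Your outline follows the standard argument from that source and is essentially sound: reduce to the Gram picture via \Cref{lem:CommutePK} and $\cK^\dagger\cK(S)=JSJ$, derive the range constraint on the $\alpha$-block, pass to the block-diagonal facial vector $U$, exhibit a maximal-rank completion for the reverse inclusion, and then peel off the $e$-direction with $V$.

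One imprecision worth fixing: in step (2) you first assert that the top-left $k\times k$ block of a centered $G$ with $\cK(G)\in\cP_\alpha^{-1}(\bar D)$ has column space contained in $\range(\bar U_G)=\range(\bar G)$. That is false as stated: from $\cK(G_\alpha)=\bar D$ one gets $J_kG_\alpha J_k=\bar G$, hence $G_\alpha=\bar G+ey^T+ye^T$ for some $y\in\R^k$, and the correct conclusion (using $G_\alpha\succeq0$ and testing with $w\perp\range(\bar U_G)\oplus\spn(e)$) is $\range(G_\alpha)\subseteq\range(U_G)=\range(\bar U_G)\oplus\spn(e)$ — the $e$-direction cannot be dropped, since $G_\alpha$ is generally not centered in $\R^k$ even though $G$ is centered in $\R^n$. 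Your subsequent ``equivalently its nullspace contains $\nul(\bar G)\cap e^\perp$'' and your use of $U_G$ rather than $\bar U_G$ in the rest of the argument show you intend the right statement, so this is a wording defect rather than a gap, but the written claim should be corrected. Your remarks that the general-position hypothesis is not needed for the face identity, and that the $\supseteq$ direction requires exhibiting a rank-$(n-k+t)$ element of $\cK^\dagger\left(\cP_\alpha^{-1}(\bar D)\cap\En\right)$ (e.g., by placing the remaining $n-k$ points in $n-k$ new affine dimensions), are both correct and are exactly the points the cited proof relies on.
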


The matrix $UV$ in~\Cref{thm:FRone} provides a facial vector for the 
minimal face corresponding to the block $\bar D$.
If we have two overlapping principal submatrices with the overlap having
the proper embedding dimension $d$, then we can efficiently find a
facial vector for the intersection of the two corresponding faces. This
is given in \cite[Lemma 2.9]{kriswolk:09} and we have added that option
to our code and include the details here in~\Cref{lem:twocliquered} for
completeness. 

\begin{lemma}[{\cite[Lemma 2.9]{kriswolk:09}}]
\label{lem:twocliquered}
Let
\[
U_1:= \kbordermatrix{ 
       & r+1 \cr
  s_1  & U^{\prime}_1  \cr 
  k    & U^{\prime\prime}_1 
}, \quad
U_2:= \kbordermatrix{
      & r+1 \cr
  k   & U^{\prime\prime}_2 \cr 
  s_2 & U^{\prime}_2
}, \quad
\hat U_1:= \kbordermatrix{
      & r+1  & s_2 \cr
  s_1 & U_1'  & 0   \cr 
  k   & U_1'' & 0   \cr 
  s_2 & 0     & I
}, \quad
\hat U_2:= \kbordermatrix{ 
      & s_1 & r+1  \cr
  s_1 & I   & 0 \cr 
  k   & 0   & U_2'' \cr
  s_2 & 0   & U_2'
}
\]
be appropriately blocked with $U_1'', U_2'' \in \MM^{k \times (r+1)}$
full column rank and $\range(U_1'') = \range(U_2'')$.
Furthermore, let
\begin{equation}
\label{eq:U1U2}
\bar U_1 := 
\kbordermatrix{ 
      & r+1 \cr
  s_1 & U_1^\prime\cr
  k   & U_1^{\prime\prime}\cr
  s_2 & U_2^{\prime} (U_2^{\prime\prime})^\dagger U_1^{\prime\prime}
}, \quad
\bar U_2 := 
\kbordermatrix{ 
      & r+1 \cr
  s_1 & U_1^{\prime} (U_1^{\prime\prime})^\dagger U_2^{\prime\prime}\cr
  k   & U_2^{\prime\prime}\cr
  s_2 & U_2^\prime
}.
\end{equation}
Then $\bar U_1$ and $\bar U_2$ are full column rank and satisfy
\[
\range (\hat U_1) \cap  \range (\hat U_2) =   
\range\left(\bar U_1 \right) = \range\left(\bar U_2 \right).
\]
Moreover, if $e_{r+1} \in \R^{r+1}$ is the $(r+1)^\mathrm{st}$ standard unit vector, and
$U_ie_{r+1} = \alpha_i e$, for some $\alpha_i \neq 0$, for $i=1,2$, then
$\bar U_ie_{r+1} = \alpha_i e$, for $i=1,2$.
\end{lemma}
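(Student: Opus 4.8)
The plan is to funnel everything through a single change-of-basis matrix on the overlap block. Since $U_1''$ and $U_2''$ are $k\times(r+1)$, both of full column rank, and $\range(U_1'')=\range(U_2'')$, there is a unique $(r+1)\times(r+1)$ matrix $T$ with $U_1''=U_2''T$; it is given by $T=(U_2'')^\dagger U_1''$, and it is invertible with $T^{-1}=(U_1'')^\dagger U_2''$. I would establish this first, since it is the only nontrivial input: the identities follow from $(U_i'')^\dagger U_i''=I_{r+1}$ together with the fact that $U_i''(U_i'')^\dagger$ is the orthogonal projection onto the common column space, so that $U_2''(U_2'')^\dagger U_1''=U_1''$ and $U_1''(U_1'')^\dagger U_2''=U_2''$. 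Every later step is then a substitution of $(U_2'')^\dagger U_1''=T$ or $(U_1'')^\dagger U_2''=T^{-1}$.

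Full column rank of $\bar U_1$ and $\bar U_2$ is immediate: each has $r+1$ columns and contains the full-column-rank block $U_1''$ (respectively $U_2''$) among its block rows, so each has rank exactly $r+1$. Next, a block-by-block computation gives $\bar U_1=\bar U_2 T$: the overlap block of $\bar U_2T$ is $U_2''T=U_1''$, its bottom block is $U_2'T=U_2'(U_2'')^\dagger U_1''$, and its top block is $U_1'(U_1'')^\dagger U_2''\,T=U_1'(U_1'')^\dagger U_1''=U_1'$, so it matches $\bar U_1$ entry-by-entry; since $T$ is invertible, $\range(\bar U_1)=\range(\bar U_2)$. The inclusion $\range(\bar U_1)\subseteq\range(\hat U_1)$ is also a one-liner: for any $x$ the vector $\bar U_1x$ has the shape $\begin{bmatrix}U_1'x\\U_1''x\\\ast\end{bmatrix}$, and since the last block-column of $\hat U_1$ is a free identity block we can realize $\bar U_1x=\hat U_1\begin{bmatrix}x\\ U_2'(U_2'')^\dagger U_1''x\end{bmatrix}$. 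Symmetrically $\range(\bar U_2)\subseteq\range(\hat U_2)$, and combined with the previous equality this yields $\range(\bar U_1)\subseteq\range(\hat U_1)\cap\range(\hat U_2)$.

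The reverse inclusion is the crux. Given $w\in\range(\hat U_1)\cap\range(\hat U_2)$, write $w=\hat U_1\begin{bmatrix}x\\ y\end{bmatrix}=\hat U_2\begin{bmatrix}z\\ u\end{bmatrix}$ and compare the three block rows: the top gives $z=U_1'x$, the overlap gives $U_1''x=U_2''u$, and the bottom gives $y=U_2'u$. Applying $(U_2'')^\dagger$ to the overlap identity and using full column rank forces $u=(U_2'')^\dagger U_1''x$; feeding this back into $y=U_2'u$ (and keeping $z=U_1'x$) identifies $w$ with $\bar U_1x$. Hence all three ranges coincide. I expect this step, together with the bookkeeping of keeping the block sizes $s_1,k,s_2$ versus $r+1$ straight in $\hat U_1$ and $\hat U_2$, to be the only place demanding attention; the rest is routine.

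For the last assertion, $U_ie_{r+1}=\alpha_i e$ says precisely that $U_i''e_{r+1}=\alpha_i e$ on the $k$ overlap coordinates and $U_i'e_{r+1}=\alpha_i e$ on the $s_i$ coordinates. From $U_2''e_{r+1}=\alpha_2 e$ and full column rank one gets $(U_2'')^\dagger e=\alpha_2^{-1}(U_2'')^\dagger U_2''e_{r+1}=\alpha_2^{-1}e_{r+1}$, so the bottom block of $\bar U_1e_{r+1}$ equals $U_2'(U_2'')^\dagger(\alpha_1 e)=\alpha_1\alpha_2^{-1}U_2'e_{r+1}=\alpha_1 e$, while its top two blocks are $U_1'e_{r+1}=\alpha_1 e$ and $U_1''e_{r+1}=\alpha_1 e$; thus $\bar U_1e_{r+1}=\alpha_1 e$. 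The computation for $\bar U_2e_{r+1}=\alpha_2 e$ is identical with the indices swapped.
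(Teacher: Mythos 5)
Your proof is correct and complete. The paper itself does not prove this lemma --- it imports it verbatim from the cited reference (Lemma 2.9 of the Krislock--Wolkowicz paper) --- so there is no in-paper argument to compare against; your route through the invertible transition matrix $T=(U_2'')^\dagger U_1''$ (with $U_1''=U_2''T$, $T^{-1}=(U_1'')^\dagger U_2''$) is the standard and natural one, and every step checks out: the identity $\bar U_1=\bar U_2T$ gives equality of the two ranges, the explicit liftings give the inclusion into $\range(\hat U_1)\cap\range(\hat U_2)$, and the block-row comparison with $(U_2'')^\dagger U_2''=I$ forces $u=Tx$ and hence the reverse inclusion; the final computation with $(U_2'')^\dagger e=\alpha_2^{-1}e_{r+1}$ is likewise correct.
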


\begin{remark}
\label{rem:bestoverlap}
In~\Cref{lem:twocliquered}, if we have two overlapping blocks 
$D_{s_1\cup k}, D_{k \cup s_2}$ we can get facial vectors from bases of
the corresponding Gram matrices in  $U_1,U_2$, respectively. These are
matched up in $\hat U_1,\hat U_2$. Then a facial vector for the
intersection of the minimal faces containing these blocks is given in
either of the formulae in \cref{eq:U1U2}. 
In our implementation, we
choose the one for which $U^{\prime\prime}_1, U^{\prime\prime}_2$ is better
conditioned. 
Thus we use the two facial vectors and find a new
facial vector for the intersection of the overlapping ranges.

This emphasizes the importance of the conditioning of the overlap in
$U^{\prime\prime}_1, U^{\prime\prime}_2$. In fact, it is essential that
the rank of each overlap is eventually $d$, the embedding dimension of
the problem. But there is no reason that
the ordering of the nodes (rows) of $D$ cannot be changed. Therefore, if
we find a well-conditioned block of correct rank $d$,  
we can include that in the overlap
for further iterations, i.e.,~we can always save and use the best conditioned
block with the largest rank in the overlap.
\end{remark}

\section{Three \DC and  Three \FR Methods}
\label{sect:threemethods}
We have three different \DC methods: bisection; minimum block
overlap; and minimum block size. And we have three different \FR
methods: exposing vector; facial vector; and Gale transform.
The result is nine possible algorithms. We now pair
each \DC method with exactly one \FR method and describe
and implement these three methods.
For simplicity, we assume that the \emph{general position} assumption
holds, i.e.,~we can easily find a principal submatrix of the Gram matrix
with rank $d$, see~\Cref{rem:overlapgenpos}.

\subsection{Bisection \DC with Exposing Vector for \FRp}
\label{sect:twoblks}
\index{\BIEVp, bisection with exposing vector}
\index{bisection with exposing vector, \BIEVp}
The first algorithm combines the bisection approach for
\DC with the exposing vector approach for \FRp, denoted \BIEVp.
We divide the data matrix $D$ into $2$ properly
overlapping blocks corresponding to principal
submatrices. We first identify whether the noisy element is within 
one of these two
principal blocks and reduce the problem to that block. 
Or if it is outside both chosen principal
blocks, then we apply \FR explicitly to solve the problem and find the
position $ij$ and noise $\delta$.


Recall that $d$ is the embedding dimension. We assume that $n \gg  d$,
i.e.,~is sufficiently larger than $d$.
Then we use the two blocks indexed by columns (rows)
\begin{equation}
\label{eq:twosets}
I_1 = \left\{1,\ldots,\ceil{(n+d+2)/2}\right\},
\quad
I_2 = \left\{\floor{(n-d-2)/2},\ldots,n\right\},
\end{equation}
i.e.,~we have the two principal submatrices 
$D_1=D_{I_1},D_2=D_{I_2}$ which overlap in the block of size at least
$\frac 12((n+d+2)-(n-d-2)) = d+2$. In addition, the assumption $n\gg d$
implies each block is at least size $d+1$.
\begin{remark}
\label{rem:overlapgenpos}
The overlap corresponds to the points 
\[I_1\cap I_2 = \left\{ \floor{(n-d-2)/2}, \ldots, 
	\ceil{(n+d+2)/2}\right\}.
\]
If the
corresponding (centered) Gram matrix has rank $d$, then we know that the
overlap of the graph is rigid. If this is not the case, then we need to
permute the columns of $D$ in order to obtain a rigid overlap.

This raises the \emph{hard} question of how to find the best overlap.
The simple case occurs if the overlap is in \textdef{general
position}, i.e.,~each principal submatrix $D_\alpha, \alpha 
\subset [n], |I| \geq d+1$ corresponds to a Gram matrix with rank $d$.
\end{remark}

We now find the \emph{supposed} Gram matrices $G_i = \cK^\dagger(D_i), i=1,2$,
i.e.,~these are indeed \emph{centered} 
Gram matrices with rank $d$  if $D_i$ is an \EDMp. 
There are now \underline{three cases} to consider for this current
approach.

\subsubsection{Case 1: Reducing Size of Problem}
\label{sect:casereduceblks}
Suppose that one of $G_i, i=1,2$, is not positive semidefinite of rank
$d$, i.e.,~as a result of the noise in the data,
the corresponding submatrix $D_i$ is not an \EDM or does not
have embedding dimension $d$. Then we can continue on the
\DC approach and reduce our problem to that
submatrix. And then we continue the division, i.e.,~we have reduced the
problem by a factor of roughly $2$. We then redefine $D$ and $n$
appropriately and return to dividing the indices in~\cref{eq:twosets}.

\subsubsection{Case 2: \EDM Completion using \FRp}
\label{sect:casecomplFR}
We first recall that
for $G\succeq 0$, \textdef{$\face(G)$} denotes the smallest face
containing $G$. Let $V,N$ denote matrices with columns that are
orthonormal basis for $\range(G),\nul(G)$, respectively, e.g.,~made up
of an orthonormal set of eigenvectors. Let $d=\rank(G)$. Then as stated
above,
\begin{equation}
\label{eq:facialexpos}
\face(G) = V\Sc^d_+ V^T = \Snp\cap (NN^T)^\perp.
\end{equation}
$V$ is a \textdef{facial vector}, \cite{ImWolk:22}, 
while $Z=NN^T$ is an \textdef{exposing vector}, e.g.,~\cite{ChDrWo:14}.
And moreover, the sum of exposing vectors is an exposing vector,
~\Cref{lem:sumexposedvecs}:
\begin{equation}
\label{eq:sumexposvctrs}
Z_i\succeq 0, \trace Z_iG=0, i=1,\ldots,k \,\,\implies \,\,
                     \sum_i Z_i \succeq 0,\,
                     \trace \left(\left(\sum_i Z_i\right)G\right)=0.
\end{equation}
Suppose both $G_i, i=1,2$, are \emph{centered}
Gram matrices with rank $d$. Then we know the
corrupted element/distance is outside the principal blocks. We now
continue to completely solve the problem using \FR as
we can now complete the partial \EDM formed from the two blocks. 
We now give the details.\footnote{See e.g.,~\cite[Algor. 1, Pg.
2308]{ChDrWo:14} for more details.}

\begin{enumerate}
\item
Using the spectral decomposition of $G_i,i=1,2$, given above,
we obtain orthonormal bases of \emph{centered} null vectors for
the nullspaces of the Gram matrices $G_i, i=1,2$,
\begin{equation}
\label{eq:GNInullsp}
G_iN_i = 0,\, N_i^Te=0, \, N_i^TN_i = I,\, i=1,2.
\end{equation}
Let $Z_i=N_iN_i^T\succeq 0, i=1,2$, be the corresponding 
\emph{centered} exposing vectors, i.e.,~we have
\index{exposing vector}
\index{exposing vector!centered}
\index{exposing vector!maximum rank}
\begin{equation}
\label{eq:Ziexposvect}
Z_iG_i = 0, G_ie=0,\, Z_ie=0, G_i+Z_i+ee^T \succ 0, \, i=1,2.
\end{equation}
\item
Let $W_i$ be zero matrices of order $n$ and set
\[
(W_i)_{I_i} = Z_i, \, i=1,2,
\]
i.e.,~we place the \emph{centered}
exposing vectors $Z_i$ into the correct blocks. Now each 
$W_i,i=1,2$, is a \emph{centered} exposing vector for the true 
centered Gram matrix $G$.
As the sum of exposing vectors is an exposing vector,
we form the \emph{centered} exposing vector of the true 
centered Gram matrix $G$,
\begin{equation}
\label{eq:sumWs}
Z = W_1+W_2.
\end{equation}
This yields a \textdef{maximum rank exposing vector}
\index{exposing vector!maximum rank}
\begin{equation}
\label{eq:GZeposdef}
GZ=0,\, Ge=0,\, Ze=0,\, Z\succeq 0,\, G+Z+ee^T\succ 0.
\end{equation}
\item
We choose $V, V^TV=I$, to be full column rank and to span $\Null
([Z~e]^T)$.\footnote{As noted following \cref{eq:facialexpos},
this is called a \emph{facial vector}
\cite{ImWolk:22}. In fact, this is a centered facial vector.} 
This completes the \FR as we have 
\begin{equation}
\label{eq:GVRVGE}
G = VRV^T,\, R\in \Sc_{++}^d, \, Ge = 0.
\end{equation}

\item
We denote the \textdef{adjacency matrix, $H_\alpha$} to be the matrix
of zeros with ones in the 
positions indexed by $I$. Recall that
$H\circ D$ denotes the Hadamard product.
We solve the cone least squares problem that does not include the noisy
element:
\index{$H_\alpha$, adjacency matrix}
\index{$H\circ D$, Hadamard product}
\index{Hadamard product, $H\circ D$}
\begin{equation}
\label{eq:lssR}
\min_{R\succeq 0}
\|H_{I_1\cup I_2} \circ \cK(VRV^T) - H_{I_1\cup I_2} \circ D\|_F^2.
\end{equation}
Since $R$ is order $d$ and at the start $d << n$, this can be a very 
overdetermined problem and
can be helped by using a \emph{sketch matrix}, see e.g.~\cite{ChDrWo:14}.

\item
In the case of random data, we expect all non-noisy blocks of size at
least $d+1$ to be proper \EDMp s. Therefore, the optimal solution $R$ in
\cref{eq:lssR} is unique and in $\Sc_{++}^d$. Therefore, we
can solve this least squares problem as an unconstrained problem and
improve on the accuracy and speed. There is one constraint, linear
equation, for each distinct pair in the set
\[
\textdef{$\cI_R$} := \left\{ (i,j) \,:\, 
(i,j) \in (I_1 \times I_1)\cup (I_2 \times I_2), \, i<j\right\}.
\]
We let $\svec : \Sn \to \Rtn$ denote the isometric mapping that vectorizes
a symmetric matrix  columnwise with multiplying the off-diagonal
elements by $\sqrt 2$. The inverse (and adjoint) is $\sMat = \svec^*$.
Similarly, $\usvec :\Sn \to \Rtno$ is for the strict triangular part
with $\usMat = \usvec^*$.
Let the unknown variable be $r = \svec(R) \in \R^{t(d)}$, 
where we denote \textdef{$t(d)$, triangular number}. The $m_R\times n_R$
system,
$m_R = t(|I_1|-1)+t(|I_2|-1)-t(|I_1\cap I_2|-1) \times (n_R := t(d))$ linear system
to solve for $r$ is
\index{triangular number, $t(d)$}
\index{$\svec$}
\index{$\sMat$}
\begin{equation}
\label{eq:eqnsfindR}
\usvec H_{I_1\cup I_2} \circ \cK(V\sMat(r)V^T) = \usvec H_{I_1\cup I_2} \circ D,
\quad ( : \R^{t(d)} \to \R^{t(n-1)}).
\end{equation}
The columns of the matrix representation are obtained by replacing $r$
with unit vectors $e_i, i = 1,\ldots t(d)$. We note that the rows
corresponding to $i,j$ not in the union of the two blocks are zero and
can be discarded.

Alternatively, we could find the transpose of the matrix representation
by taking the adjoint of the left-hand side
in \cref{eq:eqnsfindR} and working on $\R^{t(n)} \to \R^{t(d)}$.
We use $g = e_\iota\in \Rtno, \usMat(g)$. The adjoint is given by
\begin{equation}\label{eq:eqnsfindRT}
	\svec\left(V^T\cK^*\left[ H_{I_1\cup I_2}\circ \usMat(g) \right]V\right).
\end{equation}
We first simplify $H_{I_1\cup I_2}\circ \usMat(e_\iota)$ to get the row of the matrix representation. For each $\iota\in[t(n-1)]$, we can find $i,j\in [n]$ such that $\iota = i + \sum_{\ell = 0}^{j-2}\ell$ and observe
\[
H_{I_1\cup I_2}\circ \usMat(e_\iota) = \begin{cases}
	\frac{1}{\sqrt{2}}\left(e_ie_j^T + e_je_i^T\right) & \text{if }H_{ij}=1;\\
	0 & \text{otherwise},
\end{cases}
\]
where $H_{ij}$ denotes the $i,j$-th entry of $H_{I_1\cup I_2}$. 
Restricting $g$ to the case when $H_{ij}=1$, gives
\[
\begin{array}{rcl}
\cK^*\left[ H_{I_1\cup I_2}\circ \usMat(e_\iota) \right]
&=&
\cK^*\left[ \frac{1}{\sqrt{2}}\left(e_ie_j^T + e_je_i^T\right) \right]\\
&=&
\sqrt{2}\left[\Diag\left((e_ie_j^T + e_je_i^T)e\right)
-(e_ie_j^T + e_je_i^T)\right]
\\&=& \sqrt{2}\left[\Diag\left(e_i+e_j\right)-e_ie_j^T-e_je_i^T\right]\\
&=& \sqrt{2}\left(e_ie_i^T + e_je_j^T - e_ie_j^T - e_je_i^T\right).
\end{array}
\]
Plugging this into \cref{eq:eqnsfindRT}, we obtain
\begin{equation}
\label{eq:adjointrepexp}
\begin{array}{rcl}
\sqrt{2}\svec \left(v_i^Tv_i + v_j^Tv_j - v_i^Tv_j - v_j^Tv_i\right)
&=&
\sqrt{2}\svec \left(v_i^T(v_i-v_j) - v_j^T(v_i - v_j)\right)
\\&=&
\sqrt{2}\svec \left((v_i-v_j)^T(v_i - v_j)\right)
\end{array}
\end{equation}
as a column of the matrix representation of the adjoint. 
Here $v_i$ is the $i$-th row vector of $V$.
The advantage here is that we can use unit vectors for $g$ restricted
to the indices corresponding to $I_1\cup I_2$ when finding the columns.
\end{enumerate}

In our implementations, the calculation of the matrix representation was
the most expensive step. We now show how to avoid this step.

This provides a simplification for solving the least squares problem
in~\cref{eq:lssR}.
\begin{cor}
Let 
\[
D\in \En, \,G = \cK^\dagger(D), 
\]
and let $\alpha \subset [n], |I| \geq d+1$. Define
\[
D_\alpha := \cP_\alpha(D), \,  G_\alpha:=\cK^\dagger(D_\alpha).
\]
Suppose that $\rank(G_\alpha) = \rank(G) = d$. Then
\[
\begin{array}{rcl}
\cK^\dagger \cP_\alpha \cK(G)
&=&
\cK^\dagger  \cK(\cP_\alpha(G))
\\&=&
J(\cP_\alpha(G))J.
\end{array}
\]
Moreover, if $G=WW^T, G_\alpha=W_\alpha W_\alpha^T$ are full rank factorizations, and
$V,V_\alpha$ are centered facial vectors with 
\[
\range(V) = \range(G), \, \range(V_\alpha) = \range(G_\alpha), 
\]
then
\[
Q = V^\dagger W  = (JV_\alpha)^\dagger W_\alpha, \quad W = W_\alpha,
\]
and the \EDM can be recovered with $D=\cK(VQQ^TV^T)$.
\end{cor}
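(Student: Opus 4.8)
The plan is to split the corollary into the operator identity, which needs no rank hypothesis, and the factorization recipe, which does. For the two displayed equalities I would just chain together results already available. \Cref{lem:CommutePK} gives $\cP_\alpha\cK(G)=\cK\cP_\alpha(G)$ (with the usual abuse about the size of $e$ and of $\cK,\cP_\alpha$), so applying $\cK^{\dagger}$ on $\Sc^{\alpha}$ yields $\cK^{\dagger}\cP_\alpha\cK(G)=\cK^{\dagger}\cK\cP_\alpha(G)$, and then \cref{eq:projcentered}, namely $\cK^{\dagger}\cK(S)=JSJ$ on $\Sc^{\alpha}$, evaluates this to $J\cP_\alpha(G)J$. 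Since $D=\cK(G)$ and $D_\alpha=\cP_\alpha(D)$, this is precisely the statement that $G_\alpha=\cK^{\dagger}(D_\alpha)$ equals the recentering $J\cP_\alpha(G)J$ of the principal submatrix $\cP_\alpha(G)$ of $G$; the projection $J$ is genuinely needed because $\cP_\alpha(G)$ need not itself be centered.

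For the ``moreover'' part I would first recover $Q$ from the global data. Since $V$ has orthonormal columns with $\range(V)=\range(G)=\range(W)$, the matrix $VV^{T}$ is the orthogonal projector onto $\range(W)$, so $W=VV^{T}W=VQ$ with $Q:=V^{\dagger}W=V^{T}W$; comparing with $G=VRV^{T}$, where $R=V^{T}GV\in\Sc^{d}_{++}$ is the unique such matrix because $V$ has full column rank, forces $R=QQ^{T}$, so $Q$ is a square $d\times d$ matrix, invertible because $QQ^{T}\succ0$. Restricting to the rows indexed by $\alpha$ and writing $V_\alpha,W_\alpha$ for the corresponding row submatrices of $V,W$, we get $W_\alpha=V_\alpha Q$ and $\cP_\alpha(G)=W_\alpha W_\alpha^{T}$, hence by the first part $G_\alpha=(JW_\alpha)(JW_\alpha)^{T}$. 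The hypothesis $\rank(G_\alpha)=d$ then forces $JW_\alpha$, and with it $JV_\alpha$, to have full column rank $d$, with $\range(JV_\alpha)=\range(G_\alpha)$.

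The crux is the identity $Q=(JV_\alpha)^{\dagger}W_\alpha$, for which I would use that $Je=0$: then $(JV_\alpha)^{T}e=V_\alpha^{T}(Je)=0$, so $(JV_\alpha)^{\dagger}e=0$. Writing $V_\alpha=JV_\alpha+\tfrac{1}{|\alpha|}ee^{T}V_\alpha$ and applying $(JV_\alpha)^{\dagger}$, using $(JV_\alpha)^{\dagger}(JV_\alpha)=I$ (valid since $JV_\alpha$ has full column rank), gives $(JV_\alpha)^{\dagger}V_\alpha=I$, whence $(JV_\alpha)^{\dagger}W_\alpha=(JV_\alpha)^{\dagger}V_\alpha Q=Q$. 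The assertion $W=W_\alpha$ records that the block factor may be taken to be (the $\alpha$-rows of) the global factor $W$, recentered by $J$ when one insists on a centered factor of $G_\alpha$. Finally $G=VRV^{T}=VQQ^{T}V^{T}$, so $D=\cK(G)=\cK(VQQ^{T}V^{T})$ recovers the \EDMp.

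The hard part will not be any single estimate but the bookkeeping: the vectors $e$, the projection $J$, and the maps $\cK,\cK^{\dagger},\cP_\alpha$ all live on spaces of different sizes, and one must pin down precisely what $V_\alpha,W_\alpha$ and ``$W=W_\alpha$'' mean before the chain of identities is unambiguous. Once $V_\alpha,W_\alpha$ are read as the $\alpha$-row submatrices of the global $V,W$, each step is a two-line check; the one load-bearing observation is that it is $(JV_\alpha)^{\dagger}e$ --- not $V_\alpha^{\dagger}e$ --- that vanishes, which is exactly why the $J$ sits inside the pseudoinverse in the statement.
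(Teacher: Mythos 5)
Your proof is correct and follows essentially the same route as the paper: the displayed identities come from \Cref{lem:CommutePK} together with $\cK^\dagger\cK(S)=JSJ$ from \cref{eq:projcentered}, and the ``moreover'' part rests on the relation $JV_\alpha Q = W_\alpha$, which the paper takes as the defining equation for $Q$ and you instead derive from the global factorization $W=VQ$. Your write-up additionally makes explicit why $JV_\alpha$ has full column rank under the rank hypothesis and why $(JV_\alpha)^\dagger V_\alpha = I$ (via $Je=0$), and hence why the block-computed $Q$ satisfies $QQ^T=R$ and recovers all of $D$, details the paper's verification leaves implicit.
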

\begin{proof}
	Let $G\in\Snp\cap \SC, face(G) =V\Sc^d_+ V^T \unlhd \Snp$, 
	$V$ a given centered, $V^Te=0$, facial vector of full rank. The first part of the corollary is a direct consequence from~\Cref{lem:CommutePK}.
	Let $G=WW^T$ be a full rank factorization and let $Q$ be a solution of
	\[
	J\cP_\alpha V Q = W_\alpha.
	\]
	Then,
	\[
	\begin{array}{rcl}
		\cK^\dagger\cP_\alpha\cK(VQQ^TV^T) &=& J(\cP_\alpha(VQQ^TV^T))J\\
		&=& JV_\alpha QQ^TV_\alpha^T J\\
		&=& JV_\alpha Q(JV_\alpha Q)^T\\
		&=& W_\alpha W_\alpha^T = G_\alpha.
	\end{array}
	\]
	Therefore,
	\[
	\cP_\alpha\cK(VQQ^TV^T) = \cK(G_\alpha) = D_\alpha
	\]
	and thus
	\[
	\cP_\alpha\big(\cK(VQQ^TV^T)-D\big) = 0.
	\]
\end{proof}

Using the above lemma we can avoid finding the matrix representation and
find the full rank factoriazation of the small $R$ instead.

\subsubsection{Case 3: Small Remaining Block}
\label{sect:casesmallblk}
Suppose that there is a single last block $I$ with $G_\alpha$ that is not a proper
Gram matrix with the correct rank but is too small to divide further,
e.g.,~$<2d+2$. Without loss of generality, we assume
$I=\{1,2,\ldots,\ell\}$. Then there are $\ell(\ell-1)/2$ possible
elements that are noisy. We can now use other distances to find the
noisy one, i.e.,~for $i,j\in I, i\neq j$, we set 
\[
I_{ij} =  \{i,j,\ell+1,\ell+2, \ldots \ell+t\}, \, |I_{ij}| = d+1,
\]
and verify whether or not $K^\dagger\left(D_{I_{ij}}\right)$ is a Gram
matrix with rank $d$. As soon as we find the one that is not, then we
have found the noisy position $i,j$. We then choose a \emph{well conditioned}
block $I_0, |I_0|\geq d+1$ and set
\[
I_1 = I_0 \cup \{i\},\, I_2 = I_0 \cup \{j\}.
\]
We then use~\Cref{sect:casecomplFR} to find the true value of $D_{ij}$.

\subsection{Multi-Blocks with Facial Vectors, \FVp, for \FRp, \MBFVp}
\label{sect:multblcks}
The second approach uses overlapping principal submatrices,
matrix blocks, where the
overlap is minimal size $\geq d+1$ but with embedding dimension $d$.
Therefore, we have a larger number of principal submatrices but they are
significantly smaller. If at anytime we 
find a block that is not a \EDMp, then we stop
and find the noise for this small problem by using submatrices with the
correct embedding dimension.
We denote this approach $\MBFV$.
\index{\MBFVp, multiple-blocks with \FVp}
\index{multiple-blocks with \FVp, \MBFVp}

Rather than using exposing vectors as done above in the bisection
approach~\Cref{sect:twoblks},
the \FR is done by efficiently finding the intersection of two
faces corresponding to two blocks by finding the \FV using the approach 
in~\cite[Lemma 2.9]{kriswolk:09}. We find a \FV for the
small blocks using \Cref{thm:FRone}. We then find the new \FV that
represents the intersection of faces, i.e.,~the union of blocks,
using~\Cref{lem:twocliquered}.
If we end up with a small final block, we use the same strategy as
in~\Cref{sect:casesmallblk}.

\begin{remark}
We chose to divide the problem using principal submatrices. But it is
clear that we could permute the columns and rows $D \leftarrow Q
DQ^T$, where $Q\in \Pi$ is a permutation matrix, before applying the
subdivisions and the algorithm. 
In fact, we could use overlapping cliques in the graph as
long as we maintain a chordal structure, as chordalty allows for \EDM
completion, see e.g.,~\cite{MR96g:15031,MR1321802}.
\end{remark}

\subsection{Equivalent Approach Using Gale Transforms}
\label{sect:Gale}

\index{\SBGTp, small block with Gale transform}
\index{small block with Gale transform, \SBGTp}
In this section we  present an alternative approach, using Gale
transforms that is
equivalent to that of \FR and exposing vectors discussed above. 
We use the smallest blocks along with Gale transforms and denote this as
\SBGTp. We note that the notion of Gale transform \cite{MR1976856,Gale:56} 
is well known and widely used in the theory of polytopes.   
Our approach reveals interesting relationships between \FR and Gale
transforms.
We only consider the multi-block case presented in~\Cref{sect:multblcks}.

We recall from~\Cref{sect:backgrProb}
that we have points $p_1, \ldots, p_n$ in $\R^d$ and we assume 
that the affine hull of these points has full dimension $d$. 
(In fact after centering as
in~\cref{eq:translateP}, we can assume they are centered and
span $\R^d$.) Recall that the $n \times d$ matrix of points $P$ with
\index{configuration matrix, $P$}
\index{$P$, configuration matrix}
$P^T= \begin{bmatrix} p_1& \ldots&  p_n \end{bmatrix}$,
is called the \emph{configuration matrix} of these points. Note that $P$ has 
full column rank, and the Gram and \EDM matrices defined by these points are
$G=PP^T$ and $D = {\cal K}(G)$, respectively.  
The \textdef{Gale space of $D$, $\gal(D)$}, is  given by 
\index{$\gal(D)$, Gale space of $D$}
\begin{equation}
\label{eq:galnulGe}
\gal(D) = \nul \left( \left[ \begin{array}{r} P^T \\ e^T \end{array}
\right]\right) 
 = \nul \left( \left[ \begin{array}{r} G \\ e^T \end{array}
\right]\right). 
\end{equation}
Any $n \times (n-d-1)$ matrix $N$ such that the columns of $N$ form a basis of
gal($D$) is called a {\em Gale matrix} of $D$.
The $i$-th row of $N$ is called a Gale transform
of $p_i$. Note that $N$ is not unique. 
In addition, we recall that a face $f\unlhd \Snp$ is characterized by
the nullspace or range space of any $X\in\relint(f)$ and $f=\face(X)$,
i.e.,~$f$ is the \emph{minimal face containing $X$}.
\index{$\face(X)$, minimal face containing $X$}
\index{minimal face containing $X$, $\face(X)$}
\begin{thm}
\label{thm:exposingGaleequiv}
Let $G\in \Snp\cap \SC$ be a centered Gram matrix of the \EDM $D$.
Let $Z$ be a maximum rank centered exposing vector for $G$ 
as given in~\cref{eq:GZeposdef}. 
Equivalently, $Z\in \relint (\face(G)^c\cap \SC)$, where
$\cdot^c$ denotes \textdef{conjugate face}.
Then any full rank factorization $Z=NN^T$ yields a Gale matrix $N$ of $D$.
Conversely, if $N$ is a Gale matrix of $D$, then $Z=NN^T$ is a 
maximum rank centered exposing vector of $G$.
\end{thm}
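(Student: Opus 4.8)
The plan is to reduce everything to statements about ranges and nullspaces, exploiting that $G=PP^T$ and that $G$, $Z$, $ee^T$ are all positive semidefinite, so that their sum is positive definite exactly when the intersection of their nullspaces is trivial. First I would set up the dimension count: since $P^Te=0$ and $e\neq 0$, the matrix $[\,P\ \ e\,]$ has full column rank $d+1$, so $\gal(D)=\range([\,P\ \ e\,])^\perp$ has dimension $n-d-1$. I would also record $\nul(G)=\nul(P^T)$ (because $x^TGx=\|P^Tx\|^2$), which is precisely what makes the two expressions for $\gal(D)$ in~\cref{eq:galnulGe} coincide; in particular $\nul(G)\cap e^\perp=\gal(D)$.

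For the forward implication, assume $Z$ satisfies~\cref{eq:GZeposdef}. From $Z\succeq 0$ and $GZ=0$ I get $\range(Z)\subseteq\nul(G)$, and from $Ze=0$ I get $\range(Z)\subseteq e^\perp$; hence $\range(Z)\subseteq\gal(D)$. The maximum rank condition $G+Z+ee^T\succ 0$ forces $\nul(G)\cap\nul(Z)\cap e^\perp=\{0\}$, i.e.\ $\gal(D)\cap\nul(Z)=\{0\}$, so $Z$ acts injectively on the $(n-d-1)$-dimensional space $\gal(D)$. Then $Z(\gal(D))$ is an $(n-d-1)$-dimensional subspace of $\range(Z)\subseteq\gal(D)$, so $\range(Z)=\gal(D)$ and $\rank(Z)=n-d-1$. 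Consequently, in any full rank factorization $Z=NN^T$ the matrix $N$ lies in $\R^{n\times(n-d-1)}$, has full column rank, and satisfies $\range(N)=\range(Z)=\gal(D)$, i.e.\ its columns form a basis of $\gal(D)$, so $N$ is a Gale matrix of $D$.

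For the converse, given a Gale matrix $N$ I would set $Z=NN^T\succeq 0$ and verify~\cref{eq:GZeposdef} directly: $\range(Z)=\range(N)=\gal(D)\subseteq\nul(G)$ gives $GZ=0$; $e^TN=0$ (the columns of $N$ lie in $e^\perp$) gives $Ze=NN^Te=0$; $Ge=PP^Te=0$; and since $\nul(Z)=\range(N)^\perp=\gal(D)^\perp$ while $\nul(G)\cap e^\perp=\gal(D)$, we get $\nul(G)\cap\nul(Z)\cap e^\perp=\gal(D)\cap\gal(D)^\perp=\{0\}$, which is equivalent to $G+Z+ee^T\succ 0$. The alternative phrasing $Z\in\relint(\face(G)^c\cap\SC)$ is then just the translation of the conditions ``$Z\succeq 0$, $GZ=0$, $Ze=0$, $\range(Z)$ maximal'' into face language via~\cref{eq:facialexpos}.

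I do not anticipate a genuine obstacle here; the argument is linear algebra. The only points needing care are the clean equivalence between ``$A+B+C\succ 0$'' and ``$\nul(A)\cap\nul(B)\cap\nul(C)=\{0\}$'' for positive semidefinite $A,B,C$, and keeping the rank bookkeeping consistent --- $\rank(G)=d$, $\dim\gal(D)=n-d-1$, $\rank(Z)=n-d-1$ --- so that ``full rank factorization'' yields a matrix of exactly the right width $n-d-1$ in both directions.
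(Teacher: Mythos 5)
Your proposal is correct, and it is essentially the paper's argument: the paper's own proof is the one-line remark that the theorem ``follows from the definitions in \cref{eq:GZeposdef,eq:galnulGe}'', and what you have written is precisely the careful expansion of that remark --- identifying $\gal(D)=\nul(G)\cap e^\perp$, translating $G+Z+ee^T\succ 0$ into the triviality of the intersection of nullspaces, and doing the dimension count to conclude $\range(Z)=\gal(D)$ with $\rank(Z)=n-d-1$. No gaps; your version simply makes explicit the bookkeeping the paper leaves to the reader.
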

\begin{proof}
This follows from the definitions in~\cref{eq:GZeposdef,eq:galnulGe}.
\end{proof}

Recall that a set of points $\{p_1, \ldots, p_k\}$ in $\R^d$ is said to 
be \textdef{affinely independent} if
\[
 \nul \left( \left[ \begin{array}{rrr} p_1 & \cdots & p_k \\ 
                          1 &  \cdots & 1 \end{array} \right]\right) = \{ 0 \}. 
\] 
It is easy to see that
the columns of the Gale matrix $N$ encode the affine dependency of the
points $p_1, \ldots, p_n$. The 
following~\Cref{lemgp} is an immediate consequence of this definition. 

\begin{lem} 
\label{lemgp} 
Let  points $p_1, \ldots, p_k  \in \R^d, k>d$, be in 
\textdef{general position}, and let $P$ be
their configuration matrix. Then every (square) submatrix of 
$[ P \;   e ]$  of order $d+1$ is nonsingular. 
\end{lem}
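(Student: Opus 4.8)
The plan is to recognize that \Cref{lemgp} is, up to transposition, just a restatement of the general position hypothesis. Fix any $d+1$ row indices $i_1 < \cdots < i_{d+1}$ in $[k]$ (these exist since $k > d$, and there are $\binom{k}{d+1}$ such choices, all handled identically). Since $[P \; e]$ has exactly $d+1$ columns, a square submatrix of order $d+1$ must use all columns, so it is
\[
M = \begin{bmatrix} p_{i_1}^T & 1 \\ \vdots & \vdots \\ p_{i_{d+1}}^T & 1 \end{bmatrix} \in \R^{(d+1)\times(d+1)},
\]
and every order-$(d+1)$ submatrix arises this way. Because $\det M = \det M^T$, it suffices to show $M^T$ is nonsingular, and $M^T = \begin{bmatrix} p_{i_1} & \cdots & p_{i_{d+1}} \\ 1 & \cdots & 1 \end{bmatrix}$ is precisely the matrix appearing in the affine-independence criterion stated just above the lemma; so $M$ nonsingular $\iff$ $p_{i_1},\ldots,p_{i_{d+1}}$ are affinely independent.

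Next I would argue by contradiction, making the geometry explicit. Suppose $M$ is singular; then there is a nonzero $c = \begin{bmatrix} u \\ \beta \end{bmatrix}$ with $u \in \R^d$, $\beta \in \R$, and $Mc = 0$, i.e.\ $p_{i_\ell}^T u + \beta = 0$ for $\ell = 1, \ldots, d+1$. If $u = 0$ then $\beta = 0$, contradicting $c \neq 0$; hence $u \neq 0$, and the set $\{x \in \R^d : x^T u + \beta = 0\}$ is a proper hyperplane (an affine subspace of dimension $d-1$) containing all $d+1$ of the points $p_{i_1}, \ldots, p_{i_{d+1}}$. This contradicts the general position assumption, which by definition states that no $d+1$ of $p_1, \ldots, p_k$ lie in a proper hyperplane — equivalently, that every $d+1$ of them are affinely independent (cf.\ also \Cref{rem:overlapgenpos}). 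Therefore $M$ is nonsingular, which is the claim. Note that this property is translation invariant, so the earlier centering step in~\cref{eq:translateP} plays no role here.

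I do not expect a genuine obstacle; the single point requiring care is the bookkeeping between the "points as columns with a bottom row of ones" convention used in the definition of affine independence and the "$P$ with the $p_i$ as rows, appended column $e$" convention of the lemma, which is dispatched by $\det M = \det M^T$. One could equivalently phrase the entire argument via $\nul(M^T) = \{0\}$ using the displayed affine-independence definition verbatim, bypassing even the explicit hyperplane. I would nonetheless keep the hyperplane version, since it records exactly why "general position" and "all order-$(d+1)$ minors of $[P\;e]$ nonzero" are the same condition — the equivalence that is used repeatedly by the divide-and-conquer algorithms.
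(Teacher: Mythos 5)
Your proof is correct and follows exactly the route the paper intends: the paper offers no written proof, stating only that the lemma is ``an immediate consequence'' of the affine-independence definition, and your argument — every order-$(d+1)$ submatrix of $[P\;e]$ must take all $d+1$ columns, its transpose is the affine-independence matrix for the corresponding $d+1$ points, and general position makes that matrix nonsingular — is precisely that immediate consequence made explicit. The transposition bookkeeping and the hyperplane contradiction are both sound.
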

\begin{cor}
Let $P$ be the configuration matrix in \Cref{lemgp} with corresponding
\EDM $D$. Let $\alpha \subset
\{1,\ldots,k\}, |\alpha | >d$, and let $D_{\cI}$ be the corresponding 
principal submatrix of $D$. Then the rank of the centered Gram matrix
\[
\rank(\cK^\dagger(D_{\cI})) = d.
\]
\end{cor}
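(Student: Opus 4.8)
The plan is to recognize the corollary as essentially the forward implication of \Cref{lem:genpos} applied to the points of \Cref{lemgp}, and to carry this out either by invoking \Cref{lem:genpos} directly or by a short self-contained argument through the classical Gram recovery. Throughout, write $\alpha \subset \{1,\ldots,k\}$ with $k' := |\alpha| \ge d+1$, let $P_\alpha \in \R^{k'\times d}$ be the row submatrix of $P$ with rows indexed by $\alpha$, and note that $D_\alpha = \cP_\alpha(D)$ is the \EDM of the points $\{p_i : i\in\alpha\}$, since a principal submatrix of an \EDM is again an \EDMp. The only hypothesis check needed before applying \Cref{lem:genpos} is that $\embdim(D) = d$; this is immediate, because general position with $k \ge d+1$ produces $d+1$ affinely independent points among $p_1,\dots,p_k$, so $\aff\{p_1,\dots,p_k\}$ has dimension $\ge d$, and it lies in $\R^d$, hence equals $\R^d$.

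For a self-contained proof I would first show $\{p_i : i\in\alpha\}$ affinely spans $\R^d$. Since $[P_\alpha\ e] \in \R^{k'\times(d+1)}$ and $k' \ge d+1$, it contains a square submatrix of order $d+1$ obtained by selecting $d+1$ of its rows; by \Cref{lemgp} every such submatrix is nonsingular, so $[P_\alpha\ e]$ has full column rank $d+1$, which is exactly the statement that these points are not contained in any proper affine subspace of $\R^d$, i.e.\ $\dim\aff\{p_i : i\in\alpha\} = d$. Next I would identify $\cK^\dagger(D_\alpha)$ with a Gram matrix: $D_\alpha$ is hollow, so $\cK^\dagger(D_\alpha) = -\tfrac12 J\offDiag(D_\alpha)J = -\tfrac12 J D_\alpha J$ with $J = I - \tfrac1{k'}ee^T$; writing $D_\alpha = \cK(P_\alpha P_\alpha^T)$ via \cref{eq:Lindens} and using $Je = 0$ collapses the two rank-one terms, leaving $\cK^\dagger(D_\alpha) = (JP_\alpha)(JP_\alpha)^T$. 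Hence $\rank(\cK^\dagger(D_\alpha)) = \rank(JP_\alpha) = \dim\aff\{p_i : i\in\alpha\} = d$.

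I do not expect a genuine obstacle: the result is a repackaging of \Cref{lemgp} together with Schoenberg's Gram recovery, and it also follows formally from \Cref{lem:genpos}. The only items requiring a line of care are the bookkeeping that $|\alpha| > d$ is the same as $|\alpha| \ge d+1$ (so that $[P_\alpha\ e]$ genuinely contains an order-$(d+1)$ square block), and the observation that the general position hypothesis inherited from \Cref{lemgp} is precisely what forces $\embdim(D) = d$, so that \Cref{lem:genpos} applies verbatim.
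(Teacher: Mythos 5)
Your proof is correct and is exactly the argument the paper has in mind: the paper states this corollary without proof as an immediate consequence of \Cref{lemgp}, and your self-contained version (full column rank of $[P_\alpha\ e]$ from the nonsingular order-$(d+1)$ submatrices, then $\cK^\dagger(D_\alpha)=(JP_\alpha)(JP_\alpha)^T$ so the rank equals the affine dimension $d$) supplies precisely the omitted details. No gaps.
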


Given $D\in \En$ of embedding dimension $d$, and $j\in [n-d-1]$,
let $D_j=D_{[j,j+d+1]}$ be the principal submatrix of $D$ induced by the
columns (rows) $[j,j+d+1]$.  Therefore, each $D_j$ is an \EDM of order
$d+2$. Furthermore, for $j\in [n-d-2]$, the submatrices $D_j$ and $D_{j+1}$ 
overlap in $d+1$ columns (rows).   

Now suppose we have an incomplete $D\in \En$ where only the principal
diagonal blocks
$D_i, i\in [n-d-1]$ are known, while the entries of $D$ 
outside these diagonal
submatrices are not known or are noisy.
The problem addressed in this paper is how to recover all the entries of
$D$, i.e.,~how to complete the \EDMp.
We now show how to recover $D$ by computing $P^0$, a configuration 
matrix of $D$, using the Gale matrix $N$. Recall that we have assumed
that the points $p_1, \ldots, p_n$ that generate $D$ 
are in \textdef{general position}.  We now build $N$
using only information from the diagonal submatrices $D_i, i\in [n-d-1]$. 
We compare this to~\cref{eq:GNInullsp} and see the relation between the
Gale matrix and exposing vectors.

Recall that $\cK^\dagger(D_j) = -\frac{1}{2} J D_j J$. Let 
\begin{equation} 
\label{eq1}
G_j= \cK^\dagger(D_j) 
\end{equation}
be the Gram matrix of $D_j$. Then a Gale matrix $N_j$ corresponding to
$D_j$ is the ($d+2$)-vector which forms a basis of\footnote{
See also \cref{eq:GNInullsp} where $N_i$ is not necessarily a single
column as the size of $G_i$ can be larger than $G_j$
in~\cref{eq:galnulGetwo}.}
\begin{equation}
\label{eq:galnulGetwo}
\nul \left( \left[ \begin{array}{c} {G_j} \\ e^T \end{array}
\right]\right).
\end{equation}
Moreover, it follows from \Cref{lemgp} that all the entries of $N_j$ are nonzero. 
Now a Gale matrix $N$ for the entire matrix $D$ can be built one column 
at a time\footnote{This step is equivalent to the summation of exposing
vectors in~\cref{eq:sumWs}. It is shown in~\cite{ChDrWo:14} that the
summation reduces noise if the data is random and 
from a (Gaussian) normal distribution.}
as follows: the entries of the $j$th column of $N$ are zeros except
in the positions $i = j, \ldots j+d+1$ which are equal to those of $N_j$.
Obviously, by construction, $N$ is $n \times (n-d-1)$ of full column rank.

Now let $V$ be the $n \times d$ matrix whose columns form a basis
of\footnote{This is equivalent to the $V$ found in~\cref{eq:GVRVGE}.}
\[
 \nul \left( \left[ \begin{array}{r} N^T \\ e^T \end{array}
\right]\right). 
\]
Note that a configuration matrix of $D$ is given by 
\begin{equation} \label{eq2}
 P^0 = VQ, 
\end{equation} 
for some nonsingular $Q$ of order $d$,
since the columns of both $P^0$ and $V$ are both bases of the same space.
Once we find $Q$,
both $P^0$ and consequently $D={\cal K}(P^0 {P^0}^T)$ can be recovered. 
Note that (\ref{eq2}) is an overdetermined system. We see
next how to find $Q$ by considering only the first $d+2$ equations of
\cref{eq2} that we denote by:
\begin{equation} \label{eq:P0V1Q} 
  P^0_1 = V_1 Q.
\end{equation}
Here both $P^0_1$ and $V_1$ are  $(d+2) \times d$.\footnote{
Finding $Q$ is equivalent to solving the
system for $r$ in~\cref{eq:eqnsfindR}. In fact, we have the equivalence
$R=QQ^T, G = VQQ^TV^T = VRV^T$. Using the smaller system with $V_1$
would be equivalent to not using the entire overdetermined system 
in~\cref{eq:eqnsfindR}.
}

Now \cref{eq:P0V1Q} cannot be solved as is since both $P^0_1$ and $Q$ are
unknown. In order to overcome this hurdle, we multiply both sides
of \cref{eq:P0V1Q} from the left with $J$ and let $V^0_1:= J P^0_1$.
Thus we get   
\begin{equation}
\label{eq4} 
V^0_1 =  J P^0_1 = J V_1 Q.
\end{equation}
Note that $V^0_1$ is a configuration matrix corresponding to $D_1$ and 
$(V^0_1)^T e_{d+2} = 0$. Recall that $D_1$ is the principal submatrix of $D$ induced by
the rows (columns): $1,\ldots, d+2$.
Now  let $B_1$ be the Gram matrix corresponding to $D_1$. Then 
$V^0_1$ be can be found by the full-rank factorization of $B_1$, i.e., 
$B_1 = V^0_1 {V^0_1}^T$. Thus, Equation \cref{eq4}
can be solved since $Q$ is the only unknown. Then \Cref{lem:equivJXsys}
shows that \cref{eq:P0V1Q,eq4} are equivalent.  

.
\begin{lem}
\label{lem:equivJXsys}
Let $V_1$, $P^0_1$ and $V^0_1$ be as defined above. 
Under the general position assumption,
consider the two systems in the $d \times d$ variable matrix $X$  
\[ 
\mbox{System I:}  \;\; V_1 X = P^0_1 \;\;\;\;\;\;\;\;\;\;\; \mbox{System II: }
                           \;\;  J V_1 X = J P^0_1 = V^0_1.
\]
Then both systems have the same unique solution.
\end{lem}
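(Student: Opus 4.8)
The plan is to reduce both systems to rank statements about their coefficient matrices $V_1$ and $JV_1$, after noting that a common solution is already in hand. By construction $P^0 = VQ$, and restricting this identity to the first $d+2$ rows gives precisely $P^0_1 = V_1 Q$; thus $X = Q$ solves System~I. Left‑multiplying by $J$ yields $JV_1 Q = J P^0_1 = V^0_1$, so $X = Q$ also solves System~II. Consequently both systems are consistent with $Q$ a common solution, and it remains only to show that each coefficient matrix has full column rank $d$, which forces $Q$ to be the unique solution in each case.

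First I would handle $V_1$. Since $Q$ is nonsingular and $P^0_1 = V_1 Q$, it suffices to prove $\rank(P^0_1) = d$. The rows of $P^0_1$ are the points $p_1,\dots,p_{d+2}$; under the general position assumption the first $d+1$ of them are affinely independent, and $d+1$ affinely independent points of $\R^d$ must linearly span $\R^d$ --- otherwise they would all lie in a linear hyperplane through the origin, whose dimension $d-1$ is too small to contain their $d$‑dimensional affine hull. Hence the top $(d+1)\times d$ block of $P^0_1$ already has rank $d$, so $\rank(P^0_1) = d$, $V_1$ has full column rank, and System~I has the unique solution $Q$.

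Next I would handle $JV_1$, where $J = I_{d+2} - \tfrac{1}{d+2}ee^T$. From $JV_1 Q = V^0_1$ with $Q$ nonsingular, it suffices to show $\rank(V^0_1) = d$. But $V^0_1$ is exactly the full‑rank factor in $B_1 = V^0_1(V^0_1)^T$, where $B_1 = \cK^\dagger(D_1)$ is the centered Gram matrix of the principal submatrix $D_1 = D_{[1,d+2]}$; the two descriptions $V^0_1 = J P^0_1$ and $B_1 = V^0_1(V^0_1)^T$ agree because $\cK^\dagger(D_1) = J P^0_1 (P^0_1)^T J$ by~\cref{eq:projcentered}. Since $|[1,d+2]| = d+2 \ge d+1$, \Cref{lem:genpos} gives $\rank(B_1) = d$, hence $\rank(V^0_1) = d$, so $JV_1$ has full column rank and System~II also has the unique solution $Q$. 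Therefore both systems have the same unique solution.

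The one point that genuinely needs care is the passage to System~II: left‑multiplying by the rank‑deficient matrix $J$ could in principle collapse solutions, so the real content of the lemma is that $\range(V_1)$ meets $\nul(J) = \spn(e)$ only trivially. The argument above dispatches this indirectly by recognizing $JV_1$ (up to the nonsingular factor $Q$) as the rank‑$d$ Gram factor $V^0_1$; a direct route would instead verify $\range(V_1)\cap\spn(e) = \{0\}$, but that is no easier than invoking the rank of $B_1$. Everything else is routine bookkeeping.
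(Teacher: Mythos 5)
Your proof is correct, and it reaches the crucial step --- injectivity of the coefficient matrix $JV_1$ of System~II --- by a genuinely different route than the paper. The paper argues by contradiction: a nonzero $y\in\nul(JV_1)$ would give $V_1y=\alpha e$, forcing $e\in\range(V_1)=\range(P^0_1)$ and hence $\rank([P^0_1\;e])=d$, which violates \Cref{lemgp} (every order-$(d{+}1)$ square submatrix of $[P\;e]$ is nonsingular under general position). You instead compute the rank of $JV_1$ directly, by recognizing $JP^0_1=JV_1Q$ as a full-rank factor of the centered Gram matrix $B_1=\cK^\dagger(D_1)=(JP^0_1)(JP^0_1)^T$ and invoking \Cref{lem:genpos} to get $\rank(B_1)=d$. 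Both arguments ultimately cash in the general position hypothesis, but through the two different equivalent formulations the paper provides (\Cref{lemgp} versus \Cref{lem:genpos}); your version has the advantage of being constructive rather than by contradiction and of making explicit why the degenerate possibility $\range(V_1)\cap\spn(e)\neq\{0\}$ is the only obstruction, while the paper's version avoids any appeal to the Gram factorization and stays entirely at the level of the configuration matrix. Your handling of System~I (full column rank of $V_1$ from affine independence of the first $d+1$ points, hence their linear span being all of $\R^d$) is also sound and slightly more self-contained than the paper's ``relabel so that $P^0_1$ has full column rank'' remark.
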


\begin{proof}
First note that, by relabeling the points if necessary, we can assume 
without loss of generality that $P^0_1$ has full column rank. Moreover,
it follows from \cref{eq:P0V1Q} that rank $(V_1)$ =  rank $(P^0_1)$ and  
rank $([V_1 \; e])$ =  rank $([P^0_1 \; e])$ since   
$[P^0_1 \; e]$ =  
$[V_1 \; e] \left[ \begin{array}{rr} Q & 0 \\  0 &  1 \end{array} \right]$. 
Thus, $V_1$ has full column rank.

Now $Q$ is the unique solution of System I.
It is easy to see that the solution of System II is $Q + Y$ where the columns
of $Y$ are in null$(J V_1)$. Let $y$ be a nonzero vector in null$(J V_1)$, then 
$V_1 y = \alpha e$ for some scalar $\alpha$. Thus 
rank $([V_1 \;\; e])$ = rank $([P^0_1 \;\; e]) = d$
which contradicts \Cref{lemgp} since we assume that $p^1, \ldots, p^n$
are in general position. 
Thus null$(JV_1)$ is trivial and the result follows.
\end{proof}

An immediate consequence of \Cref{lem:equivJXsys} is that $Q$ in 
(\ref{eq2}) can be 
calculated from System II, for example $Q = ({V_1}^T J V_1)^{-1} {V_1}^T V^0_1$.  
Consequently, $P^0 = V Q$ and $D$ = ${\cal K}(P^0{P^0}^T)$.

\subsubsection{Example Using Gale Transforms}
\begin{example}
Consider the following \EDM  $D$ of embedding dimension $d=2$,
\[ D= \left[ \begin{array}{rrrrrr} 0 & 2 & 5 & 9 & 5 & 2 \\ 
  & 0 & 1 & 5 & 5 & 4 \\  &  & 0 & 2 & 4 & 5 \\  &  &  & 0 & 2 & 5 \\
 & & & & 0 & 1 \\ & & & & &  0   \end{array} \right].
\]
(For both the \EDM and Gram matrices we only provide the upper triangular
parts.)
And assume that 
noise is added to the entries $d_{15}$, $d_{16}$ and $d_{26}$. 
Then the Gram matrix corresponding to $D_1$ is  
$G_1= \frac{1}{2}
   \left[ \begin{array}{rrrr} 5 & 1 & -2 & -4\\  & 1 & 0 & -2 \\
 &  & 1 & 1 \\  &  &  & 5 \end{array} \right]$. 
Therefore, $V^0_1$, a configuration matrix of $D_1$, is obtained by the full-rank
factorization of $G_1$, i.e., 
$V^0_1=  \frac{1}{2} \left[ \begin{array}{rr} 1 & -3\\ -1  & -1 \\ 
           -1 & 1 \\ 1 & 3 \end{array} \right]$. 
Furthermore,  a Gale matrix for $D$ is
$N=  \left[ \begin{array}{rrr} -1 & 0 & 0 \\ 3 & -2 & 0 \\ -3 & 3 & -1 \\ 
                     1 & -2 & 2 \\ 0 & 1 & -3 \\ 0 & 0 & 2 \end{array} \right]$. 
Hence, $V$, the matrix whose columns form a basis of  
 $\nul\left( \left[ \begin{array}{r} N^T \\ e^T \end{array}
\right]\right)$, is  
$V=  \left[ \begin{array}{rr} 3 & 0 \\  0 & 1 \\ -2 & 1 \\ -3 & 0 \\
      0 & -1 \\ 2 & -1  \end{array} \right]$ and
hence  
$V_1=  \left[ \begin{array}{rr} 3 & 0 \\  0 & 1 \\ -2 & 1 \\ -3 & 0 \\ \end{array} \right]$ 
and
$JV_1=  \frac{1}{2} \left[ \begin{array}{rr} 7 & -1 \\  1 & 1 \\ 
  -3 & 1 \\ -5 & -1  \end{array} \right]$. 
Hence, solving \cref{eq4}  $V^0_1 = JV_1 Q$, we get 
$Q=  \frac{1}{2} \left[ \begin{array}{rr}  0 & -1 \\ -2 & -1 \\ \end{array} \right]$ 
Therefore, 
$P^0 = V Q =  \frac{1}{2} \left[ \begin{array}{rr} 0 &  -3 \\ -2 & -1 \\ 
-2 & 1  \\  0 & 3 \\ 2 & 1 \\ 2 & -1  \end{array} \right]$
and the full \EDM $D$ is recovered by using $D = {\cal K}(P^0{P^0}^T)$.  
\end{example}

\section{Empirics and Complexity}
\label{sect:empricseff}

We generate random problems based on: the number of points
$n$; the embedding dimension $d$; the magnitude of the noise.\footnote{
We used MATLAB version R2022b on the two servers at University of
Waterloo:
biglinux,  cpu149.math.private 	Dell PowerEdge R840 	
four Intel Xeon Gold 6230 20-core 2.1 GHz (Cascade Lake) 	768 GB;
and
fastlinux,  cpu157.math.private 	Dell PowerEdge R660 	
Two Intel Xeon Gold 6434 8-core 3.7 GHz (Sapphire Rapids) 	256 GB
}
Our table compares three methods: the first uses two blocks and \FR with
exposing vectors; the second uses a minimum number
of multiblocks with facial vectors;
and the third uses Gale transforms with the maximum number of
multiblocks. Our output indicates that all the problems were solved
successfully and this follows from the fact that the general position 
property holds generically. The output includes: the
relative error for the accurate \EDM found; and the
time in cpu seconds.  We discuss the hard cases below
in~\Cref{sect:hardcases}.
	
\subsection{Random Problems}
\Cref{table:fastlinuxMar31,table:biglinuxthree,table:biglinux60to120K}
(on pages
\pageref{table:fastlinuxMar31}, \pageref{table:biglinuxthree},
\pageref{table:biglinux60to120K})
illustrate the high efficiency of the
algorithms for speed, accuracy, and size.
The noise $\alpha$ was a normal random variable with nonzero absolute value
greater than $.01$. Both the position and a near machine precision
accurate value for the noise was found in $100\%$ of the instances.

%

\begin{table}[h]
\tiny
\begin{adjustbox}{width=\textwidth}
\begin{tabular}{|c|c|c|c|c|c|c|c|c|}  \hline
\multicolumn{3}{|c|}{Data specifications} & \multicolumn{2}{|c|}{\BIEVp} &\multicolumn{2}{|c|}{\MBFVp}&\multicolumn{2}{|c|}{\SBGTp} \\  \hline
$n$ & $d$ & noise & rel-error & time(s) & rel-error & time(s) & rel-error & time(s) \\ \hline
1000 & 5 & 0.289 &  1.86e-12 & 0.200 &8.44e-14 & 0.022 & 3.10e-12 & 0.121 \\ \hline
2000 & 5 & -0.235 & 1.22e-12 & 0.916 &1.28e-13 & 0.044 & 3.84e-13 & 0.364 \\ \hline
3000 & 5 & -0.843 &  2.33e-12 & 1.713 &3.36e-13 & 0.130 & 1.28e-13 & 0.638 \\ \hline
4000 & 5 & 0.570 & 7.01e-13 & 1.842 &3.41e-13 & 0.195 & 2.74e-13 & 1.342 \\ \hline
5000 & 5 & 0.517 & 1.10e-12 & 4.394 &8.80e-14 & 0.292 & 2.07e-13 & 1.847 \\ \hline
6000 & 5 & 0.659 & 1.30e-12 & 6.861 &3.07e-13 & 0.414 & 1.31e-12 & 2.889 \\ \hline
7000 & 5 & 0.200 & 2.16e-12 & 11.759 &7.95e-14 & 0.631 & 2.01e-13 & 3.895 \\ \hline
8000 & 5 & -0.240 & 1.71e-12 & 10.993 &1.12e-13 & 0.768 & 2.01e-13 & 5.338 \\ \hline
9000 & 5 & -0.294 & 4.63e-13 & 18.939 &1.73e-13 & 0.974 & 1.54e-13 & 7.299 \\ \hline
10000 & 5 & 0.197 & 2.01e-12 & 23.177 &1.63e-13 & 1.179 & 5.51e-13 & 9.529 \\ \hline
11000 & 5 & -0.405 & 4.42e-12 & 18.598 &7.43e-14 & 1.383 & 3.15e-13 & 11.282 \\ \hline
12000 & 5 & 0.085 & 4.99e-12 & 20.521 &3.88e-13 & 1.732 & 3.16e-13 & 14.150 \\ \hline
13000 & 5 & 0.311 & 1.42e-12 & 44.017 &3.35e-13 & 2.097 & 2.55e-13 & 17.511 \\ \hline
14000 & 5 & -0.390 & 4.50e-12 & 53.028 &7.69e-14 & 2.201 & 2.14e-11 & 20.961 \\ \hline
15000 & 5 & -0.348 & 5.31e-12 & 54.837 &3.78e-13 & 2.383 & 6.69e-12 & 25.517 \\ \hline
16000 & 5 & -0.294 & 6.14e-12 & 51.610 &1.30e-13 & 2.780 & 6.97e-13 & 29.842 \\ \hline
17000 & 5 & 0.063 & 3.08e-12 & 64.764 &2.12e-13 & 3.176 & 3.78e-13 & 33.774 \\ \hline
18000 & 5 & -0.064 & 1.33e-11 & 92.478 &2.20e-13 & 3.485 & 1.27e-12 & 38.898 \\ \hline
19000 & 5 & 0.001 & 2.81e-11 & 99.526 &3.89e-13 & 3.986 & 2.99e-13 & 43.750 \\ \hline
20000 & 5 & -0.004 & 9.56e-13 & 97.926 &9.13e-13 & 4.229 & 4.21e-13 & 51.621 \\ \hline
21000 & 5 & 0.368 & 1.09e-12 & 130.590 &1.93e-13 & 4.749 & 9.99e-13 & 58.367 \\ \hline
22000 & 5 & 0.035 & 1.21e-11 & 177.391 &7.86e-14 & 5.232 & 1.34e-13 & 66.882 \\ \hline
23000 & 5 & 0.018 & 5.76e-12 & 173.445 &2.44e-13 & 5.786 & 1.06e-12 & 73.720 \\ \hline
24000 & 5 & 1.000 & 2.69e-12 & 160.173 &1.38e-13 & 5.970 & 2.18e-13 & 82.715 \\ \hline
25000 & 5 & 0.139 & 4.08e-12 & 242.229 &3.11e-13 & 6.804 & 3.07e-12 & 91.045 \\ \hline
26000 & 5 & -0.385 & 1.91e-12 & 91.091 &1.28e-13 & 6.946 & 3.23e-13 & 102.973 \\ \hline
27000 & 5 & -0.131 & 7.17e-12 & 173.206 &9.43e-14 & 7.791 & 2.47e-13 & 112.248 \\ \hline
28000 & 5 & 0.022 & 1.18e-11 & 245.353 &3.61e-13 & 8.199 & 2.10e-10 & 124.508 \\ \hline
29000 & 5 & 0.109 & 1.05e-11 & 264.581 &6.11e-13 & 8.728 & 3.85e-12 & 134.517 \\ \hline
30000 & 5 & 0.089 & 2.68e-12 & 299.627 &2.58e-13 & 8.793 & 7.09e-13 & 149.871 \\ \hline
\end{tabular}
\end{adjustbox}

\caption{
Fastlinux; $n$ = 1K to 30K; mean of $3$ instances per row}
\label{table:fastlinuxMar31}
\end{table}



\begin{table}[h]
\begin{center}
	\begin{tabular}{|c|c|c|c|c|c|}  \hline
		\multicolumn{4}{|c|}{Data specifications} &\multicolumn{2}{|c|}{\MBFVp} \\  
		\hline $n$ & $d$ & noise & gen-time & rel-error & time(s) \\ \hline
		60000 & 3 & -0.405 & 82.643 & 7.63e-14 & 54.850 \\ \hline
		65000 & 3 & 0.386 & 94.786 & 1.24e-13 & 64.485 \\ \hline
		70000 & 3 & -0.203 & 111.036 & 3.53e-13 & 75.925 \\ \hline
		75000 & 3 & 0.436 & 128.879 & 5.53e-13 & 88.849 \\ \hline
		80000 & 3 & -0.129 & 156.128 & 2.86e-13 & 105.870 \\ \hline
		85000 & 3 & -0.081 & 190.250 & 3.21e-13 & 123.193 \\ \hline
		90000 & 3 & 1.000 & 213.570 & 4.92e-13 & 134.878 \\ \hline 
	\end{tabular} 
\end{center}

\caption{
BigLinux; Multi-block solver with gen time; mean of $3$ instances per row}
\label{table:biglinuxthree}
\end{table}

\begin{table}[h]
\begin{center}
\begin{tabular}{|c|c|c|c|c|c|}  \hline
\multicolumn{4}{|c|}{Data specifications} &\multicolumn{2}{|c|}{\MBFVp} \\  \hline $n$ & $d$ & noise & gen-time & rel-error & time(s) \\ \hline
60000 & 5 & 0.436 & 90.767 & 1.32e-13 & 53.049 \\ \hline
70000 & 5 & 0.026 & 109.262 & 3.37e-13 & 72.006 \\ \hline
80000 & 5 & 0.550 & 155.565 & 3.12e-13 & 93.529 \\ \hline
90000 & 5 & 0.435 & 196.015 & 8.91e-13 & 114.084 \\ \hline
100000 & 5 & 0.420 & 243.331 & 9.21e-13 & 144.727 \\ \hline
110000 & 5 & 0.330 & 315.411 & 3.01e-13 & 196.053 \\ \hline
120000 & 5 & 0.205 & 385.318 & 7.16e-13 & 237.335 \\ \hline
\end{tabular}
\end{center}

\caption{
BigLinux; Multi-block solver with gen time; mean of $3$ instances per row}
\label{table:biglinux60to120K}
\end{table}

\subsection{Complexity Estimates}
\label{sect:estimateswork}

We now look at theoretical complexity estimate results and compare them to
the empirical output. For randomly generated problems,
we have plotted dimension versus solution time in 
\Cref{fig:dimtimesemilog}, page \pageref{fig:dimtimesemilog}.
This agrees with the estimates
in~\cref{eq:TwoBruntime,eq:MBruntime,eq:Galeruntime} for the three methods,
respectively:
 \[
\BIEVp,   \MBFVp, \SBGTp: \quad O(n^3),  O(n),  O(n^3).
\]
Note that the expense for generating the random problems is $O(n^2)$ as
the main work is the multiplication using the configuration matrix $PP^T$.

\begin{figure}[h]
\vspace{-2in}
\begin{center}
\hspace*{-3cm}
\includegraphics[height=7in,width=7in]{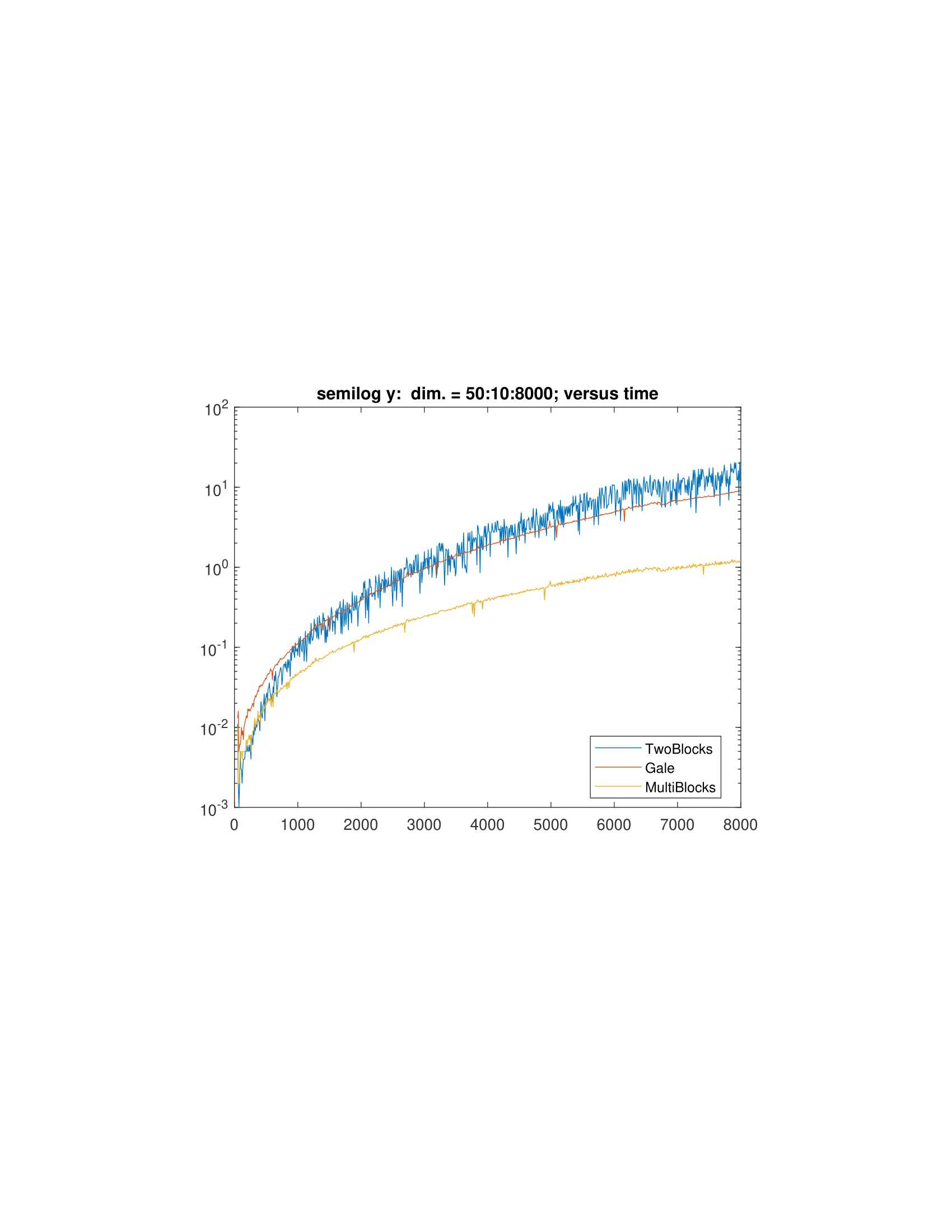}
\vspace{-2.5in}
\caption{Semilogy plot; dimension versus solution time}
\label{fig:dimtimesemilog}
\end{center}
\end{figure}

\subsubsection{Bisection with Exposing Vectors, \BIEVp}
At each step, we divide an $n \times n$ matrix into two blocks with size $\frac{n+d+2}{2}$ so they overlap in a block of size $d+2$. 
The most time consuming calculation in each iteration is calculating the
spectral decomposition of the corresponding Gram matrices
$G_1, G_2$ with runtime $2
O\left(\left( \frac{n+d+2}{2}\right)^3\right) = O(n^3)$, as $n \gg d$. 
The number of points outside of the two principal submatrices is $2
\left( \frac{n-d-2}{2}\right)^2 = \frac{(n-d-2)^2}{2}$. If the noisy
element is outside of the two principal submatrices, the algorithm will
terminate in this step. Thus, the probability to continue to another
step is 
\[
 1 - \frac{(n-d-2)^2}{2n^2} \approx \frac{1}{2}.   
\]
If the noisy element is in one of the principal submatrices, then we
continue with the divide and conquer algorithm. After the division, the
smaller matrix has size $\frac{n+d+2}{2}$, i.e.,~we reduce the size of problem
by approximately half. After $i$ divisions, the size of the matrix is
$\frac{n + 2^{i-1}(d+2)}{2^i}$.
The total runtime of this algorithm is $O(n^3)$. We now drop the $d$ 
and constants in the analysis as $n \gg d$.
Let $T(n)$ be the total runtime of the algorithm, and $f(n)$ be the
runtime of one iteration of the subproblem. Then, we have the recurrence
\[
T(n) = \frac{1}{2} T\left(\frac{n}{2}\right) + f(n) = \frac{1}{2}
T\left(\frac{n}{2}\right) + O(n^3). 
\]
The $O(n^3)$ term dominates the recursive relationship. We get:
\begin{equation}
\label{eq:TwoBruntime}
\text{total runtime is  } O(n^3).
\end{equation}

\subsubsection{Multi-Block with Facial Vectors, \MBFVp}
We build up the full facial vector from
overlapping small principal submatrices of size $2d+6$. Each overlaps
with the previous one with a block of size $d+3$. Solving for
the final connecting matrix $Q$
also involves solving a small system of equations using the first
block, thus costing a constant time. Finding the facial vector of each
small matrix has constant runtime as well, since $n$ is large. The number of
small blocks we look at is roughly $\frac{n}{d+2}$. Therefore,
we get the
\begin{equation}
\label{eq:MBruntime}
\text{total runtime is  } O(n).
\end{equation}

\subsubsection{Small Blocks with Gale Transform, \SBGTp}
We calculate the Gale matrix of each
small principal submatrix of size $d+2$, and then build up the Gale matrix
for the entire matrix $D$. The most time consuming step is in finding $V
\in \R^{n \times d}$, the \FV, whose columns form a basis of $\nul\left(
\begin{bmatrix} N^T \\ e^T \end{bmatrix}\right)$. This has runtime
complexity of:
\begin{equation}
\label{eq:Galeruntime}
\text{total runtime is  } O(n^3).
\end{equation}

\section{Hard Cases; No General Position Assumption}
\label{sect:hardcases}
We now consider problems where the general position assumption may not
hold. This can lead to hard cases for our algorithm. The hard cases are
related to problems where all but $2$ points are in a linear manifold of
dimension $d-2$. We present examples as well as empirics for the hard
cases.

Our modified algorithm for the hard case finds a block with 
rank $d$ by checking all
\emph{consecutive} principal diagonal blocks $D_{[i,i+d]}$ of size $d+1$. 
If we can find a block with the correct embedding dimension $d$ from the principal diagonal blocks, we can proceed with the Gale algorithm as before. Otherwise, we calculate the Gale matrix
$$
N_i = \begin{bmatrix} n_i^T \\ \vdots  \\ n_{i+d}^T \end{bmatrix}, \text{ whose columns form a basis for } \Null\left(\begin{bmatrix} G_i \\ e^T
\end{bmatrix}\right), 
$$
for each small block $D_i := D_{[i, i+d]}$. Since the $G_i$ does not
have rank $d$, the points $p_i, \cdots, p_{i+d}$ are not in general
position. We identify the indices where  the corresponding Gale
transform results in a row of zeros.
If $n_j = 0$, then $p_j$ is not in the affine hull of $\{p_i, \cdots,
p_{i+d}\} \setminus \{p_j\}$ \cite[Section 7.2.1]{alfm18}. 
We collect these indices in a set $I$, so $j \in I$ if there is some $i \in \{1, \cdots, n-d\}$ such that $j \in \{i, \cdots, i+d\}$ and $n_j = 0$ in $N_i$. 
Additionally, let $I' = \{1, \cdots, n\} \setminus I$. We repeat the process on $D_{I \cup I'(1:d+1)}$ until we have $d+1$ points that are in general position. 

There are certain cases where we can not solve the problem, so-called
\textdef{hard cases}.
The results in~\Cref{sect:deq2} show that critical to our
conclusions is the difficult case
when all but two points are in a linear manifold of dimension $d-2$.

\begin{definition}[good, bad blocks]
Let $D_n$ be the noisy \EDM as above. Let
$(i,j)$ be the corrupted index.
\begin{enumerate}
\item
By a \textdef{good block} we mean a principal submatrix $({D_n})_{\alpha}$, 
such that the corresponding
 $\cK^\dagger\left((D_n)_{\alpha}\right)$ is 
positive semidefinite with rank $d$.
\item
By a \textdef{bad block} we mean a principal submatrix $({D_n})_{\alpha}$, 
such that the corresponding
 $\cK^\dagger\left((D_n)_{\alpha}\right)$ is either not positive semidefinite or the
rank is greater than $d$.
\item
An \textdef{uncorrupted good
block} means a principal submatrix $({D_n})_{\alpha}$,
such that the corresponding
 $\cK^\dagger \left((D_n)_{\alpha}\right)$ is positive semidefinite with rank $d$,
and $\alpha$ does not contain either corrupted index $i$~or~$j$.  
\end{enumerate}
\end{definition}

If we assume that we can find an \textdef{uncorrupted good block} 
of size $d+1$, then we will not encounter an unsolvable hard case. 
Under the assumption there exists an uncorrupted good block, we will not encounter the unsolvable case. 
After we found a block $D_\alpha$ of size $d+1$ with embedding dimension $d$, we calculate the Gale matrix for  $D_{\alpha \cup \{ i\}}$ for each $i \in \{1, \cdots, n\} \setminus \alpha$. 
If $D_{\alpha}$ contains the corrupted entry, then by the result in ~\Cref{sect:deq2}, 
the only case we cannot recognize $D_{\alpha \cup \{ i\}}$ to be a corrupted \EDM is if all but two points in this set are in a linear manifold of dimension $d-2$. 
However, under the assumption of the data, there exists $d+1$ points: $\{p_{i_1}, \cdots, p_{i_{d+1}}\}$ in general position and is uncorrupted. 
Therefore, it cannot be that
$D_{\alpha \cup \{i_1\}}, \cdots, D_{\alpha \cup\{ i_{d+1}\}}$
all result in the case of all but two points are in a linear manifold of dimension $d-2$. 
Thus, if $D_{\alpha}$ contains the corrupted entry, we will be able to recognize it is not a proper \EDMp.

\subsection{Characterizing Good and Bad Blocks;
$\embdim(D)=2$ and Beyond}
\label{sect:deq2}
We know that an \EDM $D$ with embedding dimension $d$ corresponds to a
Gram matrix $G = \cK^\dagger(D)\succeq 0$ with $\rank(G) = d$. 
The question that arises is whether, when a single element is
perturbed in $D$, can this corrupted block \emph{always} be
verified correctly by checking that there are no longer exactly $d$ nonzero
and positive eigenvalues for the corresponding Gram matrix
$G$? More precisely, does a corrupted block always have a negative
eigenvalue and/or more than $d$ positive eigenvalues.
We show that indeed this can always be detected except when
all but two points of a corresponding configuration matrix $P$ lie in a
linear manifold of dimension $d-2$. In addition, we show how to handle
these cases by extending the notion of \textdef{yielding} 
in~\cite[Section 7.2]{alfm18}. Note that the $2$ in the
results arises from the fact
that the perturbation matrix $\epsilon E_{ij}$ is rank $2$.

The case when the embedding dimension $d=2$ is special. We conclude that no
failures in detection can occur if the distances are positive, an easy
check. For this special case $d=2$, we do not have to assume that
the problem is in general position to recognize a corrupt entry. 

In our algorithms we have to identify whether the
corrupted element is within a principal submatrix $D_\alpha$, where
for \MBFV the cardinality $|\alpha|\geq d+2$ and is 
often relatively small $|\alpha|<<n$.
We do not assume general position, see~\Cref{lem:genpos}. 
We present the characterization 
in the general $d$ case in~\Cref{thm:genposzerosgenerald}.
This is related to the results on  \textdef{yielding}
in~\cite[Section 7.2]{alfm18},~\cite{MR3842577}. 
Also, this is related to the general question of when a matrix pencil 
is positive semidefinite. The interval for this
is studied recently in~\cite{NGUYEN2024371}.
Our case is special in that we look at rank two updates
\[
D(\epsilon):=D+\epsilon E_{ij}\in\En, D\in  \En,\, 
G(\epsilon) := G + \epsilon \cK^\dagger
(E_{ij})\succeq 0, G\succeq 0.
\]
The specific case of a rank two perturbation as in our case is studied
in~\cite{MR87f:15014}. 
However, we have the additional condition that the rank is maintained at $d$.

Essentially we characterize the cases when
the perturbation $D(\epsilon)$ results in a corresponding
Gram matrix with the correct rank, thus fooling the algorithm.
We use the MATLAB notation \textdef{$\blkdiag$} to denote the 
block diagonal matrix formed from the arguments. 

\begin{theorem}
	\label{thm:genposzerosgenerald}
	Let $D\in \Ek$ with $\embdim(D) = d \leq k-2$.
	Suppose that there is a \underline{single, nonzero}
	corrupted distance $D_{ij}, i<j$, and the noisy \EDM is denoted by the
	\textdef{singular matrix pencil}
	\[
	D(\epsilon) := D + \epsilon E_{ij}, \, \epsilon\in \R\setminus\{0\}.
	\]
	Let $G := \cK^\dagger(D) = Q\Lambda Q^T$ be the Gram matrix with
	its spectral decomposition and, without loss of generality,
	\[
	\Lambda = \blkdiag(0,\Lambda_+),\, \Lambda_+\in \Sc_{++}^d.
	\]
	Denote $G_E:= \cK^\dagger(E_{ij}) = -\frac 12 JE_{ij}J$ and let $\bar G_E$ be defined and appropriately blocked with size $d$:
	\[
	\bar G_E := Q^TG_EQ = 
	\begin{bmatrix} \bar G_{11}&\bar G_{12}\cr \bar G_{12}^T&\bar G_{22},
	\end{bmatrix}, \, \bar G_{22}\in \Sc^{d}.
	\]
	Define the open interval $I_\epsilon$ for maintaining
	$\Lambda_{+}+\epsilon \bar G_{22} \succ 0$, equivalently for 
	maintaining $I_d + \epsilon\Lambda_{+}^{-1/2} 
	\bar G_{22}\Lambda_{+}^{-1/2} \succ 0$, as
	\begin{equation}
		\label{eq:Ieps}
		I_\epsilon = \left(-\frac 1{\lambda_{\max}\left(\Lambda_+^{-1/2}\bar
			G_{22}\Lambda_+^{-1/2}\right)} ,
		-\frac 1{\lambda_{\min}\left(\Lambda_+^{-1/2}\bar
			G_{22}\Lambda_+^{-1/2}\right)}\right)
		\subseteq (-\infty,+\infty),
	\end{equation}
	where a $0$ in the denominator results in $-\infty,+\infty$, for the
	left/right bound of the interval, appropriately. Define the condition:
	\begin{equation}
		\label{eq:EDMcond}
		\textdef{\EDM condition:} \quad
		\fbox{$\epsilon \in I_\epsilon \text{  and  }
			\epsilon \left(\bar G_{11} -  \epsilon \bar G_{12}
			\left(\Lambda_+ + \epsilon \bar G_{22}\right)^{-1} \bar G_{21}
			\right) \succeq 0$}.
\end{equation}
Then:
\begin{enumerate}
\item
The \EDM condition~\cref{eq:EDMcond} defines the convex
yielding interval for $D_{ij}$. More precisely, suppose that 
the \text{\EDM condition } \cref{eq:EDMcond} holds. Then:
\begin{subequations}
\begin{align}
	&      
D(\epsilon)\in \Ek \text{ and } \embdim(D(\epsilon)) \geq d;
				\label{eq:EDMcondthm}
	\\ &    
	\bar G_{11} = 0  \implies  D(\epsilon) \in \Ek, \, \embdim(D(\epsilon))=d;
				\label{eq:EDMcondthmedimd}
\\ &   
	\bar G_{11} -  \epsilon \bar G_{12}\left(\Lambda_+ + 
				\epsilon \bar G_{22}\right)^{-1}
				\bar G_{21}=0 \nonumber
				\\ & \qquad \qquad	\implies  D(\epsilon) \in \Ek, \, \embdim(D(\epsilon))=d.
				\label{eq:EDMcondthmedimdtwo}
			\end{align}
			\label{eq:EDMcondguardall}
		\end{subequations}
		
\item
\label{item:case2yieldingthm}
Conversely, we have the following necessary conditions
for \textdef{restricted yielding}.
		Suppose that there exists $\delta\in\R_{++}$
		such that
		\[
		D(\epsilon) \in \Ek, \, \embdim(D(\epsilon)) = d, \, \forall \epsilon
		\in (-\delta,\delta).
		\] 
		Then the configuration matrix $P$ of $D$, $PP^T = G = \cK^\dagger(D)$,
		has  $k-2\geq
		d$ points that are in a linear manifold $\cL$ of dimension $d-2$.
		Necessarily, the two points $i,j$ outside the linear manifold define the
		corrupted distance $D_{ij}$.
	\end{enumerate}
\end{theorem}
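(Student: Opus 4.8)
The plan is to push everything through the Moore--Penrose inverse $\cK^\dagger$ and then diagonalise $G$. Since $D$ and $E_{ij}$ are hollow, so is $D(\epsilon)$; hence $D(\epsilon)\in\Ek$ if and only if $G(\epsilon):=\cK^\dagger(D(\epsilon))=G+\epsilon G_E\succeq 0$, and in that case $\embdim(D(\epsilon))=\rank G(\epsilon)$. Conjugating by the orthogonal $Q$ of the spectral decomposition of $G$ this is equivalent to
\[
M(\epsilon):=\Lambda+\epsilon\bar G_E=\begin{bmatrix}\epsilon\bar G_{11}&\epsilon\bar G_{12}\\ \epsilon\bar G_{12}^T&\Lambda_++\epsilon\bar G_{22}\end{bmatrix}\succeq 0,
\]
a matrix inequality affine in $\epsilon$, so the yielding set $\{\epsilon:D(\epsilon)\in\Ek\}$ is automatically convex, i.e.~an interval.

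For Part~1 I would invoke the Schur complement. When $\epsilon\in I_\epsilon$ the block $\Lambda_++\epsilon\bar G_{22}$ is positive definite, so $M(\epsilon)\succeq 0$ if and only if its Schur complement $\epsilon\bar G_{11}-\epsilon^2\bar G_{12}(\Lambda_++\epsilon\bar G_{22})^{-1}\bar G_{21}\succeq 0$, which is exactly the \EDM condition~\cref{eq:EDMcond}; this gives $D(\epsilon)\in\Ek$. The rank additivity formula for Schur complements gives $\rank M(\epsilon)=\rank(\Lambda_++\epsilon\bar G_{22})+\rank(\text{Schur complement})=d+\rank(\text{Schur complement})\ge d$, which is~\cref{eq:EDMcondthm}. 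If in addition the Schur complement is $0$ then $\rank M(\epsilon)=d$, i.e.~$\embdim(D(\epsilon))=d$; that is~\cref{eq:EDMcondthmedimdtwo}, and~\cref{eq:EDMcondthmedimd} follows because when $\bar G_{11}=0$ the \EDM condition reads $-\epsilon^2\bar G_{12}(\Lambda_++\epsilon\bar G_{22})^{-1}\bar G_{21}\succeq 0$ while positive definiteness of $(\Lambda_++\epsilon\bar G_{22})^{-1}$ forces the same quantity $\succeq 0$; hence it vanishes and $\bar G_{12}=0$.

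For Part~2, suppose $D(\epsilon)\in\Ek$ with $\embdim(D(\epsilon))=d$ for all $|\epsilon|<\delta$. After shrinking $\delta$ so that $\Lambda_++\epsilon\bar G_{22}\succ 0$ throughout, the rank formula forces the Schur complement to vanish identically on $(-\delta,\delta)\setminus\{0\}$; dividing by $\epsilon$ and letting $\epsilon\to 0$ gives $\bar G_{11}=0$, and then $\bar G_{12}(\Lambda_++\epsilon\bar G_{22})^{-1}\bar G_{21}\equiv 0$, which at $\epsilon=0$ gives $\bar G_{12}\Lambda_+^{-1}\bar G_{21}=0$, whence $\bar G_{12}=0$ since $\Lambda_+^{-1}\succ 0$. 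Writing $Q=[\,Q_0\ \ Q_+\,]$ with the columns of $Q_0$ spanning $\nul(G)$, the equalities $\bar G_{11}=0=\bar G_{12}$ say $Q_0^TG_E=0$, i.e.~$\range(G_E)\subseteq\range(Q_+)=\range(G)=\range(P)$. Here $d\le k-2$ is essential, as it makes $\range(G)\subsetneq e^\perp$, so that this is a genuine restriction (for $d=k-1$ one always has $\range(G)=e^\perp\supseteq\range(G_E)$, and the conclusion indeed fails). Since $G_E=\cK^\dagger(E_{ij})=-\tfrac12\big((Je_i)(Je_j)^T+(Je_j)(Je_i)^T\big)$ and $Je_i,Je_j$ are linearly independent whenever $k\ge 3$ (which holds as $k\ge d+2\ge 3$), we obtain $\range(G_E)=\spanl\{Je_i,Je_j\}$, so $Je_i,Je_j\in\range(P)$; in particular $d\ge 2$.

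It remains to read off the geometry, in the spirit of the yielding results of~\cite[Section~7.2]{alfm18}. Writing $Je_i=Pa_i$, $Je_j=Pa_j$ and comparing $\ell$-th rows gives $p_\ell^Ta_i=\delta_{\ell i}-\tfrac1n$ and $p_\ell^Ta_j=\delta_{\ell j}-\tfrac1n$ for all $\ell$. The vectors $a_i,a_j$ are linearly independent, since $\mu a_i+\nu a_j=0$ forces $\mu Je_i+\nu Je_j=P(\mu a_i+\nu a_j)=0$ and hence $\mu=\nu=0$. Therefore $\cL:=\{x\in\R^d:(a_i-a_j)^Tx=0,\ a_i^Tx=-\tfrac1n\}$ is a flat of dimension $d-2$; for $\ell\notin\{i,j\}$ the two row relations give $(a_i-a_j)^Tp_\ell=0$ and $a_i^Tp_\ell=-\tfrac1n$, so $p_\ell\in\cL$, whereas $a_i^Tp_i=1-\tfrac1n\ne-\tfrac1n$ and $(a_i-a_j)^Tp_j=-1\ne0$ show $p_i,p_j\notin\cL$. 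Thus exactly the $k-2\ (\ge d)$ points $\{p_\ell:\ell\ne i,j\}$ lie in the $(d-2)$-dimensional linear manifold $\cL$, and $i,j$ are the two points outside it. The main obstacle I anticipate is the bookkeeping in Part~2: keeping the Schur-complement rank identity restricted to the $\epsilon$-range where $\Lambda_++\epsilon\bar G_{22}\succ 0$, justifying the $\epsilon\to0$ limits, and then recognising that the innocuous-looking inclusion $\spanl\{Je_i,Je_j\}\subseteq\range(P)$ is precisely the statement that all but $p_i,p_j$ lie on a common $(d-2)$-flat.
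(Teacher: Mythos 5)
Your proof is correct and follows the same overall architecture as the paper's: pass to the Gram pencil $G+\epsilon G_E$ via Schoenberg's criterion, conjugate by the orthogonal $Q$, and analyse the blocked pencil through Schur complements; in the converse direction, force the Schur complement to vanish, deduce $\bar G_{11}=\bar G_{12}=0$, i.e.\ $\range(G_E)\subseteq\range(G)$, and read off the geometry. Two local steps differ. For the rank bound in Part~1 you use the Guttman rank-additivity identity $\rank M(\epsilon)=\rank(\Lambda_++\epsilon\bar G_{22})+\rank(\text{Schur complement})$, whereas the paper argues by continuity of eigenvalues; your version is cleaner and delivers \cref{eq:EDMcondthmedimdtwo} with no extra work. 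For the geometric conclusion in Part~2 the paper identifies the eigenvectors $e_{ij}=e_i-e_j$ and $e_{ijc}=\frac 2k e-(e_i+e_j)$ of $G_E$, inserts them as the first two columns of a facial vector $V$, and observes that the rows $V(\ell,:)$, $\ell\neq i,j$, lie in $\{0\}\times\{2/k\}\times\R^{d-2}$ (then checks invariance under $R\succ 0$). You instead solve $Pa_i=Je_i$, $Pa_j=Je_j$ and exhibit the flat $\cL$ explicitly as the solution set of $(a_i-a_j)^Tx=0$, $a_i^Tx=-1/k$; this is more direct, avoids the facial-vector and $R$ bookkeeping, and makes the exclusion of $p_i,p_j$ from $\cL$ a one-line computation. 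The only blemishes are notational: the constants in the row identities should be $1/k$, not $1/n$, since the theorem concerns $D\in\Ek$.
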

\begin{proof}
	We define the \textdef{matrix pencil}
	\[
	G(\epsilon) := \cK^\dagger(D(\epsilon))  = G + \epsilon G_E.
	\]
	Our results depend on identifying when the perturbed Gram matrix, the
	matrix pencil,
	maintains: $G(\epsilon)=G + \epsilon G_E\succeq 0$ with 
	$\rank G(\epsilon) = d$. We use the spectral decompositions and
	the \textdef{Sylvester law of inertia}. The latter identifies when
	positive semidefiniteness and rank $d$ are maintained 
	under a congruence.
	
	To begin, we need to consider the eigenpairs for the two nonzero eigenvalues of 
	\[
	G_E:= \cK^\dagger(E_{ij}) = -\frac 12 JE_{ij}J.
	\]
	An orthogonal pair of eigenvectors of $E_{ij}$ is $e_i\pm e_j$. We have
	the properties $E_{ij}\neq 0, \trace(E_{ij})=0$ and
	\[
	\lambda_1(E_{ij} ) =1 > 0 = 
	\lambda_2(E_{ij} ) = \ldots =
	\lambda_{n-1}(E_{ij} )=0 > \lambda_n(E_{ij} ) =-1.
	\]
	Therefore, the above singular congruence $JE_{ij}J=\cP_{\SC}(E_{ij})$,
	by~\cref{eq:projcentered}, implies
	\[
	\lambda_1(G_E )  > 0 = 
	\lambda_2(G_E ) = \ldots =
	\lambda_{n-1}(G_E )=0 > \lambda_n(G_E),
	\]
	i.e.,~there are exactly two nonzero eigenvalues of $G_E$;
verified as well in the following.
	Define the two orthogonal vectors
	\begin{equation}
		\label{eq:eigvecGE}
		e_{ij} := e_i-e_j, \,
		e_{ijc} := \frac 2k e - (e_i+e_j) \in e^\perp \subset \Rk.
	\end{equation}
	One can verify that $e_{ij}$ and $e_{ijc}$ are eigenvectors of $G_E$ and the corresponding eigenvalues are:
	\begin{equation}
		\label{eq:eigvalGE}
		\lambda_1(G_E) = \frac 12 > 0 > \lambda_k(G_E) =\frac {2-k}{2k}.
	\end{equation}
	Notice that signs of the eigenvalues are strictly positive and strictly negative. The congruences with $Q$ and the interlace theorem for eigenvalues yield $\lambda_{\max}(\bar G_{22})\ge0\ge\lambda_{\min}(\bar G_{22})$. The congruence with $\Lambda_{+}^{-1/2}$ and the definition \cref{eq:Ieps} of $I_\epsilon$ bring us to $0\in I_\epsilon$ and we conclude that
	\[
	\begin{array}{rcrcl}
		\epsilon\in I_\epsilon &\iff& I_d + \epsilon\Lambda_{+}^{-1/2} \bar G_{22}\Lambda_{+}^{-1/2} &\succ& 0\\
		&\iff& \Lambda_{+}+\epsilon \bar G_{22} &\succ& 0,
	\end{array}
	\]
	and that
	\[
	G+\epsilon G_E = Q(\Lambda+\epsilon\bar G_E)Q^T\succeq0 \iff \Lambda + \epsilon\bar G_E\succeq0.
	\]
	\begin{enumerate}
		\item
		Assuming \cref{eq:EDMcond}, we have $\Lambda_{+}+\epsilon \bar G_E\succ0$. By the Schur complement theorem,
		\[
		\epsilon \left(\bar G_{11} -  \epsilon \bar G_{12}
		\left(\Lambda_+ + \epsilon \bar G_{22}\right)^{-1} \bar G_{21}
		\right) \succeq 0 \iff
		G(\epsilon) \succeq 0.
		\]
		Under the assumptions and definitions on $D(\epsilon), I_\epsilon$,
		and by continuity of eigenvalues and of the 
		linear transformation $\cK^\dagger$, we get that the $d$ positive
		eigenvalues of $G$ are perturbed but stay positive in $G(\epsilon)$, 
		i.e.,~recalling the assumption that $\embdim(D) = d$, we have
		\[
		\rank(G(\epsilon)) \geq \rank(G)=d\implies\embdim(D(\epsilon))\geq d.
		\]
		This proves \cref{eq:EDMcondthm}.
		
		\cref{eq:EDMcondthmedimd} and \cref{eq:EDMcondthmedimdtwo} follow from a
		Schur complement argument. That is,
		\[
		\begin{array}{rcl}
			\Lambda + \epsilon \bar G_E &=& \begin{bmatrix}
				\epsilon \bar G_{11}&\epsilon \bar G_{12}\cr
				\epsilon \bar G_{12}^T&\Lambda_{+} + \epsilon \bar G_{22}
			\end{bmatrix}  \\
			&\cong&
			\begin{bmatrix}
				\epsilon \left(\bar G_{11} - \epsilon \bar G_{12}
				\left(\Lambda_+ + \epsilon \bar G_{22}\right)^{-1} \bar G_{21}
				\right)&\epsilon \bar G_{12}\cr
				0& \Lambda_{+} + \epsilon \bar G_{22}
			\end{bmatrix}.
		\end{array}
		\]
		Since $\rank(\Lambda_{+} + \epsilon \bar G_{22})=d$, $\bar G_{11} - \epsilon \bar G_{12}
		\left(\Lambda_+ + \epsilon \bar G_{22}\right)^{-1} \bar G_{21}=0$ implies
		\[
		\rank(\Lambda+\epsilon\bar G_E)=\rank(G+\epsilon G_E)=d.
		\]
		Similarly, $\bar G_{11}=0$ leads to $\bar G_{12}=0$ and thus $\rank(G+\epsilon G_E)=d$.
		Therefore~\cref{eq:EDMcondguardall} holds.
		
		\item
		Suppose that there exists $\delta >0$ such that, for all
		$|\epsilon|<\delta$, we have $D(\epsilon)\in\Ek$ and $\embdim(D(\epsilon))=d$. This is equivalent to
		\[
		G+\epsilon G_E\succeq0 \text{ and } \rank(G+\epsilon G_E)=d.
		\]
		
		We want to show that
		\[
		G+\epsilon G_E\succeq 0, \,
		\rank(G+\epsilon G_E) = \rank(G)  \implies 
		\range(G_E) \subset \range(G).
		\]
		Recall that $G+\epsilon G_E\succeq0\iff \Lambda+\epsilon\bar G_E\succeq0$ and that
		\[
		\Lambda + \epsilon \bar G_E \cong
		\begin{bmatrix}
			\epsilon \left(\bar G_{11} - \epsilon \bar G_{12}
			\left(\Lambda_+ + \epsilon \bar G_{22}\right)^{-1} \bar G_{21}
			\right)&\epsilon \bar G_{12}\cr
			0& \Lambda_{+} + \epsilon \bar G_{22}
		\end{bmatrix}.
		\]
		Since $0\in I_\epsilon$, we can find small
		enough $|\epsilon|>0$, namely $\epsilon\in I_\epsilon\cap(-\delta,\delta)$, so that $\Lambda_{+}+\epsilon\bar G_{22}\succ0$. Hence $\rank(\Lambda_{+}+\epsilon\bar G_{22})=d$ and note that the rank of the entire matrix is the sum of the ranks of the diagonal blocks. Hence, the top left block after the elimination has to have rank $0$, by $\rank(G+\epsilon G_E)=d$ assumption. Therefore,
		\begin{equation}\label{eq:schuriszero}
			\bar G_{11} - \epsilon \bar G_{12}
			\left(\Lambda_+ + \epsilon \bar G_{22}\right)^{-1} \bar G_{21} = 0.
		\end{equation}
		\cref{eq:schuriszero} is violated for small enough $|\epsilon|$ unless $\bar G_{11}=0$. Hence $\bar G_{11}=0$, which also implies $\bar G_{12}=0$. Hence,
		\[
		\range(\bar G_E)\subseteq \range(\blkdiag(0, I_d)) = \range(\blkdiag(0, \Lambda_{+})) =  \range(\Lambda),
		\]
		which is equivalent to
		\[
		\range(G_E) = \range(Q\bar G_EQ^T)\subseteq \range(Q\Lambda Q^T) = \range(G).
		\]
		
		This implies that the facial vector $V$ for $G=\cK^\dagger(D)$ must have
		range that contains the span of the two eigenvectors of
		$G_E= \cK^\dagger(E_{ij})$. Otherwise, 
		at least one of the following happens: 
		(i) we lose positive semidefiniteness; (ii) the number of 
		positive eigenvalues increases.  Therefore, we must have
		\begin{equation}
			\label{eq:FVGeegne}
			\range\left(\begin{bmatrix}
				e_{ij} & e_{ijc} 
			\end{bmatrix}\right)
			\subset \range(V), \quad 
			V = \begin{bmatrix} e_{ij} & e_{ijc}  & \bar V
			\end{bmatrix}, \quad
			\begin{bmatrix}
				e_{ij} & e_{ijc} 
			\end{bmatrix}^T  \bar V = 0.
		\end{equation}
		
		Thus, the $\ell$-th row of $V$ is
		\[
		V(\ell,:) = \begin{bmatrix} 0 & \frac2k & \bar V(\ell,:)
		\end{bmatrix}\in \{0\}\times\{2/k\}\times \R^{d-2},
		\]
		for all $\ell\neq i,j$.
		This shows that for $R=I_d$,
		all but two points are in a manifold of dimension $d-2$.
		This means the same is true for general $R\in \Sc^d_{++}$. Without loss of generality, let $i=n-1,j=n$. Then,
		\[
		V= \begin{bmatrix}
			V_1 \cr V_2
		\end{bmatrix}\in\R^{k\times d}, V_1\in\R^{(k-2)\times d}, V_2\in\R^{2\times k}.
		\]
		Note that rows of $V_1$ are in a $d-2$ dimensional manifold if, and only if, $\rank V_1\le d-2$. Since $R\succ0$, $V_1R^{1/2}$ maintains the same rank as $V_1$, i.e., the rows of $V_1R^{1/2}$ are in a manifold of dimension $d-2$ where
		\[
		\begin{array}{rcl}
			G &=& VRV^T = [VR^{1/2}][VR^{1/2}]^T\cr
			&=& \begin{bmatrix}
				V_1R^{1/2}\cr V_2R^{1/2}
			\end{bmatrix}
			\begin{bmatrix}
				V_1R^{1/2}\cr V_2R^{1/2}
			\end{bmatrix}^T.
		\end{array}
		\]
	\end{enumerate}
\end{proof}

\begin{cor}
\label{cor:genposzeros}
Let $D\in \Ek$ with $\embdim(D) = 2 < k-1$.
Suppose that there is a corrupted position $i<j$ and the noisy \EDM is
\begin{equation}
\label{eq:epssmallbnddistwo}
D_n = D + \epsilon E_{ij}, \,  \epsilon \neq 0.
\end{equation}
If $D_n\in \Ek, \embdim(D_n)\leq d$, and $|\epsilon|$ is sufficiently
small, then the (centered) configuration
matrix $P$ of $D$, $PP^T = G = \cK^\dagger(D)$, has (at least) $k-2\geq
2$ points that are equal, i.e.,~ the rows $P_{s:} = p\in \Rd, 
\forall s\in [k], s\neq i, s\neq j$. Thus 
\[
D_{st}=(D_n)_{st}=0, \forall s,t\in [k], s\neq i, t\neq j.
\]
\end{cor}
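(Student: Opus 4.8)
The plan is to read this off as the $d=2$ specialization of the converse part of \Cref{thm:genposzerosgenerald} (the \emph{restricted yielding} direction), and then translate that theorem's geometric conclusion into the simple statement about coinciding points and vanishing distances. First I would check that the hypotheses of \Cref{thm:genposzerosgenerald} are in force. Writing $d=\embdim(D)=2$, the assumption $2<k-1$ forces $k\geq 4$, hence $d=2\leq k-2$, so the singular matrix pencil $D(\epsilon)=D+\epsilon E_{ij}$ with $\embdim(D)=d\leq k-2$ fits the theorem verbatim, and the corruption is by nonzero noise by assumption.

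The one point that genuinely needs care is supplying the restricted-yielding hypothesis, namely the existence of $\delta>0$ with $D(\epsilon)\in\Ek$ and $\embdim(D(\epsilon))=d$ for all $\epsilon\in(-\delta,\delta)$. I would assemble this from the corollary's hypothesis as follows. The lower bound $\embdim(D(\epsilon))\geq d$ is automatic near $\epsilon=0$: the Gram matrix $\cK^\dagger(D(\epsilon))=G+\epsilon G_E$ depends continuously on $\epsilon$, equals $G$ of rank $d$ at $\epsilon=0$, and the set $\{A\in\Sc^k:\rank A\geq d\}$ is open, so $\rank(\cK^\dagger(D(\epsilon)))\geq d$ on a whole neighborhood of $0$. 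Combining this with the assumed upper bound $\embdim(D_n)\leq d$ — read, as the qualifier ``$|\epsilon|$ sufficiently small'' indicates, as holding for all small enough $\epsilon$ and not merely for the single realized noise value — pins $\embdim(D(\epsilon))=d$, and $D(\epsilon)\in\Ek$, on an interval around $0$. That is precisely the restricted-yielding hypothesis, so \Cref{thm:genposzerosgenerald} applies.

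The theorem then says the configuration matrix $P$ of $D$ has $k-2\geq d=2$ of its points in a linear manifold $\cL$ of dimension $d-2=0$, and that the two points outside $\cL$ are exactly $i,j$, which define the corrupted distance. A zero-dimensional linear manifold is a single point, so there is $p\in\Rd$ with $P_{s:}=p$ for every $s\in[k]\setminus\{i,j\}$; equivalently, in the notation used inside the proof of \Cref{thm:genposzerosgenerald}, the block $\bar V$ is empty for $d=2$, so the rows $V(\ell,:)$ are all the constant row $[\,0\ \ 2/k\,]$ for $\ell\neq i,j$, and $G=VRV^T$ then forces the corresponding rows of the configuration to coincide. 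The distance identities are now immediate: for indices $s,t$ both outside $\{i,j\}$ we get $D_{st}=\|p_s-p_t\|^2=\|p-p\|^2=0$, and since $E_{ij}$ vanishes in every entry other than $(i,j)$ and $(j,i)$, also $(D_n)_{st}=D_{st}+\epsilon(E_{ij})_{st}=0$.

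The main obstacle, as flagged above, is the honest handling of ``$|\epsilon|$ sufficiently small'': one realized value of $\epsilon$ does not suffice, because the proof of \Cref{thm:genposzerosgenerald} extracts $\bar G_{11}=0$ by letting $\epsilon\to 0$ in the Schur-complement identity~\cref{eq:schuriszero}, which needs that identity on a full punctured neighborhood of $0$ — hence the restricted-yielding reading of the hypothesis is not optional. Everything after invoking the theorem is bookkeeping. It is worth recording that the unusually clean form of this conclusion reflects the special status of $d=2$ emphasized in \Cref{sect:deq2}: there no general-position assumption is needed to detect a corrupted entry, and the degenerate configuration identified here is the \emph{only} way a single-entry corruption can still look like a genuine \EDM of embedding dimension $2$.
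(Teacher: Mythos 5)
Your proposal is correct and follows exactly the same route as the paper: the paper's proof is the single sentence ``apply \Cref{thm:genposzerosgenerald}, \Cref{item:case2yieldingthm}, for this special $d=2$ case with $|\epsilon|$ small to guarantee we are within the interval,'' and you have simply spelled out the hypothesis-checking, the reading of ``$|\epsilon|$ sufficiently small'' as supplying the restricted-yielding interval, and the translation of a $(d-2)=0$-dimensional linear manifold into coinciding points with zero mutual distances. No gap; your expanded treatment of how the single-$\epsilon$ hypothesis yields the two-sided interval needed by the theorem is a point the paper glosses over but resolves in the same way you do.
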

\begin{proof}
We apply \Cref{thm:genposzerosgenerald},~\Cref{item:case2yieldingthm}
for this special $d=2$ case with $|\epsilon|$ small to guarantee we are
within the interval.
\end{proof}

\subsection{Example of Hard Case; Multiple Solutions}
The following is an example where we have determined a \emph{bad} block
but where we cannot determine the \emph{position} of the entry 
that is corrupted. 
Consider the following corrupted \EDM in embedding dimension 3. 
\[ 
\begin{array}{rcl}
D(18) := D_n = \begin{bmatrix}
0 & 4 & 16 & 8 & 6 & 14 \\
4 & 0 & 4 & 4 & 6 & 6 \\
16 & 4 & 0 & 8 & 14 & 6 \\
8 & 4 & 8 & 0 & \fbox{$18$} & 14 \\
6 & 6 & 14 & \fbox{$18$}  & 0 & 20 \\
14 & 6 & 6 & 14 & 20 & 0 
\end{bmatrix} 
\quad
G_n = \cK^\dagger(D_n); \,\,
\lambda(G(18)) = \lambda(G_n) \approx
\begin{pmatrix} 
  -0.7252\\
    0.0000\\
    0.0000\\
    4.7157\\
    7.4003\\
   13.2759
\end{pmatrix}. 
\end{array}
\]
This is not an \EDM, as its corresponding $G_n=\cK^\dagger(D_n)$ is not positive
semidefinite. 
However, there is more than one way to change only one entry of this matrix
and obtain an \EDM. 
We can change the $18$ in the $(4,5)$ entry in two different ways, to
$14$ and $6/5$, respectively, and get
\EDMp s $D_1:=D(14), D_2:=D(6/5)$, with corresponding positive semidefinite
$G(\alpha)=\cK^\dagger(D(\alpha))$: 
\[ 
\lambda(G(14)) \approx 
\begin{pmatrix}
  0.000000   \cr  0.000000 \cr    0.000000 \cr    4.876894  \cr   6.000000
\cr 13.123106
\end{pmatrix},
\qquad
\lambda\left(G\left(\frac 65\right)\right) \approx 
\begin{pmatrix}
  0.000000 \cr    0.000000  \cr   0.000000  \cr   1.562857 \cr
6.189609  \cr   14.114201 
\end{pmatrix}.
\]
However, if we use a value in the middle of the changes $[6/5,14]$,
e.g.,~$D(5)$, then the corresponding $G(5)$ is indeed a Gram matrix but
has $4>d$ positive eigenvalues. This corresponds with 
\Cref{thm:genposzerosgenerald} and means that we have the end points of
a yielding interval where $D(6/5) + \epsilon E_{45}\in \cE^6$ if, and only if,
$\epsilon\in [0,12.8]$. This also means that the two eigenvectors
of $G_E= \cK^\dagger(E_{45})$ are not in $\range(G(6/5)$ or
$\range(G(14))$.
In fact, by checking the rank of $[G~v]$, with $v$ one of the
eigenvectors for $G_E$, we can see that one eigenvector is in the range
while the other is not, thus explaining the finite yielding intervals.
The example can be extended to higher embedding dimension using yelding
intervals formed with the eigenvectors in the appropriate range.

\begin{figure}[h]
\vspace{-2in}
\begin{center}
\includegraphics[height=7in,width=5in]{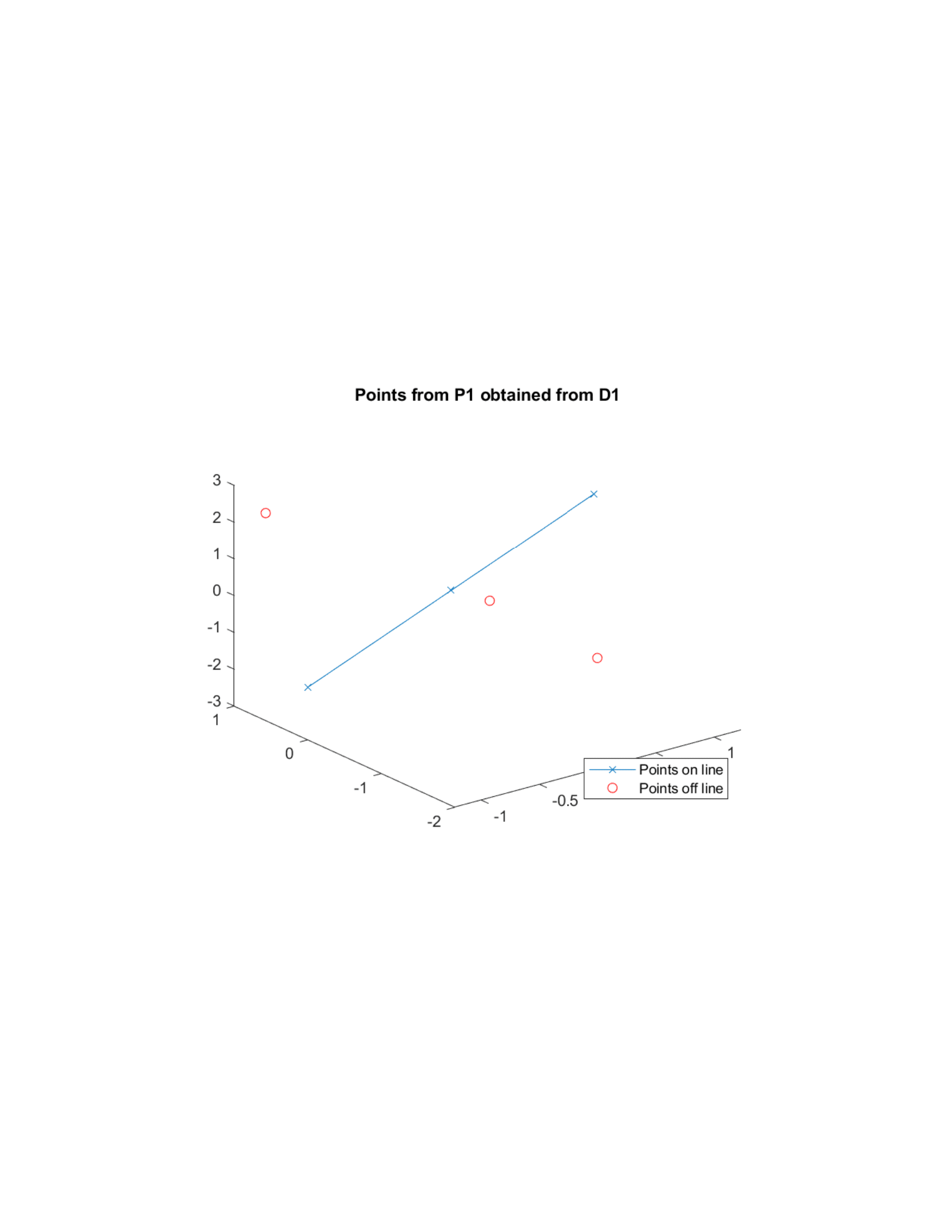}
\vspace{-2.2in}
\caption{Three points off the line}
\label{fig:ptsD1}
\end{center}
\end{figure}
Or we can change the $(4,6)$ entry in 2 different ways and obtain a proper \EDM
\[ D_3 = \begin{bmatrix}
0 & 4 & 16 & 8 & 6 & 14 \\
4 & 0 & 4 & 4 & 6 & 6 \\
16 & 4 & 0 & 8 & 14 & 6 \\
8 & 4 & 8 & 0 & 18 & \fbox{$\frac{42}{5}$} \\
6 & 6 & 14 & 18 & 0 & 20 \\
14 & 6 & 6 & \fbox{$\frac{42}{5}$} & 20 & 0 
\end{bmatrix} \;\;   D_4 = \begin{bmatrix}
0 & 4 & 16 & 8 & 6 & 14 \\
4 & 0 & 4 & 4 & 6 & 6 \\
16 & 4 & 0 & 8 & 14 & 6 \\
8 & 4 & 8 & 0 & 18 & \fbox{$2$} \\
6 & 6 & 14 & 18 & 0 & 20 \\
14 & 6 & 6 & \fbox{$2$}  & 20 & 0 
\end{bmatrix} \]
Similarly, changing the $(5,6)$ entry can also make $D_n$ into an \EDM
\[ D_5 = \begin{bmatrix}
0 & 4 & 16 & 8 & 6 & 14 \\
4 & 0 & 4 & 4 & 6 & 6 \\
16 & 4 & 0 & 8 & 14 & 6 \\
8 & 4 & 8 & 0 & 18 & 14 \\
6 & 6 & 14 & 18 & 0 & \fbox{$14$} \\
14 & 6 & 6 & 14 & \fbox{$14$} & 0 
\end{bmatrix} \;\; D_6 = \begin{bmatrix}
0 & 4 & 16 & 8 & 6 & 14 \\
4 & 0 & 4 & 4 & 6 & 6 \\
16 & 4 & 0 & 8 & 14 & 6 \\
8 & 4 & 8 & 0 & 18 & 14 \\
6 & 6 & 14 & 18 & 0 & \fbox{$6$} \\
14 & 6 & 6 & 14 & \fbox{$6$} & 0 
\end{bmatrix}\] 
This situation occurs in general when all but 3 points are in a manifold of dimension $d-2$. In this case, points 1-3 are on a line in $\R^3$, and point 4-6 are off the line. \\
For example, the configuration of $D_1$, 
see~\Cref{fig:ptsD1}, page~\pageref{fig:ptsD1}, is 
\[ P_1 = \begin{bmatrix}
0 & 0 & 2 \\ 0 & 0 & 0 \\ 0 & 0 & -2 \\ -2 & 0 & 0 \\ 1 & 2 & 1 \\ 1 & -2 & -1
\end{bmatrix} \]
If we perturb the distance between points 4 and 5, then we no
longer have a \EDM.
However, if we ignore the existence of point 6, then all but 2 points are on a manifold of dimension $d-2$ so the $(4,5)$ entry is yielding. Thus,
the submatrix formed by points 1 to 5 is still a proper \EDM. Thus we
can fix the corrupted \EDM by changing the $(4,6)$ entry instead.

\subsection{Empirics for the Hard Case} 
\label{sect:empiricshardcase}
We now generate random hard problems where some points are generated to be on a linear manifold of dimension less than $d$. 
Thus, many points are not in general position. 
Under the Data specification columns, the fourth column indicates the number of points not in general position. 
The fifth column indicate the dimension of manifold these points live in.
We run the problems with $n = 100$ and $d$ from 2 to 10. For each embedding dimension, we run 50 problems.
Our output demonstrate the algorithm works well for the hard problems.

\begin{table}[H]
\tiny
\begin{adjustbox}{width=\textwidth}
\begin{tabular}{|c|c|c|c|c|c|c|c|}  \hline
\multicolumn{5}{|c|}{Data specifications} &\multicolumn{3}{|c|}{Hard Gale Transform with Multi Blocks} \\  \hline
$n$ & $d$ & noise & \# pts not in general position& dim of pts &tol-attained & rel-error & time(s) \\ \hline
100 & 2 & 0.016 & 79 & 1 & 100\% & 1.64e-12 & 0.058 \\ \hline
100 & 3 & 0.071 & 86 & 1 & 100\% & 1.62e-13 & 0.050 \\ \hline
100 & 4 & -0.472 & 82 & 3 & 100\% & 5.08e-13 & 0.034 \\ \hline
100 & 5 & -0.328 & 78 & 3 & 100\% & 6.98e-13 & 0.032 \\ \hline
100 & 6 & 0.185 & 89 & 1 & 100\% & 1.08e-12 & 0.048 \\ \hline
100 & 7 & 0.334 & 91 & 1 & 100\% & 3.09e-12 & 0.045 \\ \hline
100 & 8 & -0.193 & 78 & 2 & 100\% & 3.98e-12 & 0.041 \\ \hline
100 & 9 & 0.394 & 76 & 8 & 100\% & 9.31e-12 & 0.028 \\ \hline
100 & 10 & 0.222 & 83 & 5 & 100\% & 1.91e-11 & 0.047 \\ \hline
\end{tabular}
\end{adjustbox}

\caption{
Average of 50 hard problems; hard problems where many point are generated to be on a manifold of dimension less than the embedding dimension $d$}
\label{table:hardproblems}
\end{table}

\section{Conclusion}
\label{sect:concl}
In this paper we have studied a case of error-correction in \EDM with special
structure, i.e.,~we assume that the \EDM $D$ has known embedding
dimension $d$ and that exactly one distance is in error, is corrupted.
We have presented three different strategies for divide and conquer
and three different types of facial reduction. Our approaches
accurately identify and correct exactly one corrupted distance of an \EDMp. 
The numerical tests confirm that we
can solve huge problems to high precision and quickly. In fact, the tests
on random problems with $n=100,000$ for the best method 
take approximately $100$ seconds to solve to machine precision; 
and this confirms our analysis of $O(n)$ cost for the best
of our three algorithms that we tested.
Note that $n=100,000, d=3$ means that $P$ has $3(10^6)$ variables and the
Gram matrix is dense and has order $5(10^{12})$ variables. Attempting to solve 
these problems using \SDP with interior-point or
first order methods would not be reasonable and, even if possible, would
not obtain high accuracy; whereas we obtain near machine precision.

We include a characterization of when a perturbation of a single element
results in a \EDM with unchanged embedding dimension $d$.
This is equvalent to maintaining the difficult  constant rank constraint.
Moreover, we provide a characterization for when the \NEDM problem
solves our problem, i.e,~this happen if, and only if, the original data
element $(D_0)_{ij} = 0$ and the perturbation  $\alpha <0$, a highly
degenerate trivial case as the location $ij$ is identified.

In addition, the algorithm extends to any number of corrupted elements that
are outside the blocks that we choose. We can also work with a chordal
graph and choose overlapping cliques to obtain the principal
submatrices.

{\bf Acknowledgement:} The authors would like to thank Walaa M. Moursi
for many hours of helpful conversations.

\bibliographystyle{siam}

\cleardoublepage
\label{ind:index}
\printindex
\addcontentsline{toc}{section}{Index}

\cleardoublepage
\bibliography{.master,.edm,.psd,.bjorBOOK}
\addcontentsline{toc}{section}{Bibliography}

\end{document}